\newtheorem{theorem}{Theorem}
\newtheorem{corollary}[theorem]{Corollary}
\newtheorem{lemma}[theorem]{Lemma}
\newtheorem{remark}[theorem]{Remark}
\numberwithin{theorem}{section}
\numberwithin{equation}{section}
\title{Gamma, Gaussian and Poisson approximations for random sums using size-biased and generalized zero-biased couplings} \author{Fraser Daly\footnote{Department of Actuarial Mathematics and Statistics and the Maxwell Institute for Mathematical Sciences, Heriot-Watt University, Edinburgh EH14 4AS, UK.  E-mail: f.daly@hw.ac.uk}} \date{\today}
\begin{document}

\maketitle

\noindent{\bf Abstract} 
Let $Y=X_1+\cdots+X_N$ be a sum of a random number of exchangeable random variables, where the random variable $N$ is independent of the $X_j$, and the $X_j$ are from the generalized multinomial model introduced by Tallis \cite{tallis62}. This relaxes the classical assumption that the $X_j$ are independent. We use zero-biased coupling and its generalizations to give explicit error bounds in the approximation of $Y$ by a Gaussian random variable in Wasserstein distance when either the random variables $X_j$ are centred or $N$ has a Poisson distribution. We further establish an explicit bound for the approximation of $Y$ by a gamma distribution in stop-loss distance for the special case where $N$ is Poisson. Finally, we briefly comment on analogous Poisson approximation results that make use of size-biased couplings. The special case of independent $X_j$ is given special attention throughout. As well as establishing results which extend beyond the independent setting, our bounds are shown to be competitive with known results in the independent case.   
\vspace{12pt}

\noindent{\bf Key words and phrases:} equally correlated model; random sum; central limit theorem; size-biased distribution; zero-biased distribution
%; Wasserstein distance; stop-loss distance; Stein's method

\vspace{12pt}

\noindent{\bf MSC 2020 subject classification:} 62E17; 60E10; 60E15; 60F05

\section{Introduction}

Let $X_1,X_2,\ldots$ be exchangeable random variables which satisfy
\begin{equation}\label{eq:model}
\mathbb{E}[e^{i(t_1X_1+\cdots+t_nX_n)}]=\rho\mathbb{E}[e^{iX_1(t_1+\cdots+t_n)}]+(1-\rho)\prod_{j=1}^n\mathbb{E}[e^{it_jX_j}]\,,
\end{equation}
for all $n\in\{1,2,\ldots\}$, all $t_j$ ($j=1,\ldots,n$), and for some parameter $\rho\in[0,1]$. This is (a slight generalization of) the generalized multinomial model introduced by Tallis \cite{tallis62}. In this model, the parameter $\rho$ interpolates between the case where the $X_j$ are independent (at $\rho=0$) and the case of comonotonicity (at $\rho=1$). 

Letting $N$ be a non-negative, integer-valued random variable independent of the $X_j$, in this note we will consider the random variable $Y=X_1+\cdots+X_N$, giving the aggregated claim amount in the collective insurance model in which $N$ represents the number of claims in a given period and the $X_j$ represent the individual claim amounts. Such random sums $Y$ also arise in a large number of other applications in various fields; see, for example, Chapter 1 of \cite{gnedenko96} for a selection of such applications.

This random sum $Y$ has been studied in the context of insurance applications by Kolev and Paiva \cite{kolev05,kolev08}, since it gives a more flexible and realistic model than the classical setting in which the $X_j$ are independent. The model (\ref{eq:model}) allows for dependence between the claim sizes $X_j$ in the form of a constant correlation coefficient. It is easily checked that $\rho$ is the correlation between $X_j$ and $X_k$ for any $j\not=k$, but we note that not every sequence of equally correlated random variables may be represented by the model (\ref{eq:model}); see Remark 3 of \cite{kolev05} for a counterexample.

Much is known about the asymptotic distribution of random sums such as $Y$ in the independent case (i.e., when $\rho=0$). We again refer the interested reader to \cite{gnedenko96}. In particular, we note that in some settings it is appropriate to consider either a Gaussian distribution, gamma distribution or Poisson distribution as a simple approximation to the distribution of $Y$, which in general is rather complicated. It is therefore natural to ask for quantitative error bounds in these approximations, and we discuss some previous work in this direction later in this introduction. It is also natural to conjecture that reasonable approximations continue to hold if the correlation parameter $\rho$ is not too large. To the best of our knowledge, this question has not been previously explored. Our primary aim  here is to establish explicit error bounds in the Gaussian, gamma and Poisson approximations for $Y$ which hold in the general case $\rho\geq0$, and which are competitive with known results in the special case where $\rho=0$. In this way we will confirm that reasonable approximations do continue to hold if $\rho$ is small enough.

We consider the approximation of $Y$ by a Gaussian random variable, a gamma random variable and a Poisson random variable in Sections \ref{sec:normal}--\ref{sec:poisson} below, respectively, where we will make use of various coupling constructions related to size-biasing, zero-biasing and some generalizations of zero-biasing defined recently by D\"obler \cite{dobler17}, which we will discuss in detail below. These constructions will then be used in conjunction with Stein's method for probability approximation to yield explicit error bounds in the approximation of $Y$. For an introduction to Stein's method for Gaussian approximation, and the use of zero- and size-biasing in conjunction with this technique, we refer the interested reader to \cite{chen11}. Poisson approximation by Stein's method is discussed in detail in \cite{barbour92}. The literature on Stein's method for gamma approximation is less well developed, but see \cite{gaunt17} and references therein for an indication of the state of the art in this area.

In line with our application to insurance models, we will quantify our approximation error in terms of the stop-loss distance, defined for real-valued random variables $Y$ and $Z$ by
\[
d_{SL}(Y,Z)=\sup_{a\in\mathbb{R}}|\mathbb{E}(Y-a)_+-\mathbb{E}(Z-a)_+|\,,
\]
where $x_+$ denotes $\max\{0,x\}$. In the Gaussian and Poisson settings, we will bound the stop-loss distance by noting that $d_{SL}(Y,Z)\leq d_{W}(Y,Z)$ for any random variables $Y$ and $Z$, where $d_W$ is the Wasserstein (or $L_1$) distance defined by
\[
d_W(Y,Z)=\sup_{h\in\mathcal{H}_W}|\mathbb{E}h(Y)-\mathbb{E}h(Z)|\,,
\]
$\mathcal{H}_W$ is the set of absolutely continuous functions $h:\mathbb{R}\to\mathbb{R}$ such that $\|h^\prime\|\leq1$, and $\|\cdot\|$ is the supremum norm given by $\|g\|=\sup_x|g(x)|$ for any real-valued function $g$.

Many authors have previously considered approximation, and especially Gaussian approximation, for such random sums $Y=X_1+\cdots+X_N$, under a variety of different assumptions on the random variables $N$ and $X_j$. To the best of our knowledge, previous work in this area applies only in the case where the $X_j$ are independent (i.e., $\rho=0$ in the model we consider), and so an important contribution of our work is to derive results which explicitly quantify the quality of the approximation in cases where $\rho>0$.   

We will highlight here some recent work that is most closely related to our own in the independent case, but refer the reader to the work of D\"obler \cite{dobler15} and Shevtsova \cite{shevtsova18_po} for more extensive discussions of the related literature. We will also restrict our attention to approximation results in Wasserstein distance, though we note that much work (including \cite{dobler15} and \cite{shevtsova18_po}) treat other probability metrics, including in particular the uniform (or $L_\infty$) distance. 

The majority of work in this area is in the direction of Gaussian approximation. Notable exceptions include \cite{shevtsova18_po}, which provides approximation results for a large number of metrics in the case where $N$ has a mixed Poisson distribution. There the author uses a conditioning argument to consider a limiting random variable of the form $Z\sqrt{\Theta}$, where $Z$ is Gaussian and $\Theta$ is a particular distributional limit for the random variables defining the Poisson mixtures, which is independent of $Z$. As one special case, Shevtsova \cite{shevtsova18_po} treats approximation by a Laplace distribution. Approximation of random sums by a Laplace distribution has also been considered by Pike and Ren \cite{pike14} using Stein's method. This technique has also been employed to approximate random sums by an exponential distribution by Pek\"oz and R\"ollin \cite{pekoz11}.

Most papers that deal with Gaussian approximation in Wasserstein distance for random sums $Y$ do so under the assumption that $N$ follows a particular distribution: the Poisson (see \cite{shevtsova18_po} and many references therein), binomial \cite{shevtsova18_bin,sunklodas14}, negative binomial \cite{shevtsova18_po,sunklodas15} or mixed Poisson \cite{shevtsova18_po} distributions, among others, have been popular choices, for reasons of both their analytic tractability and utility in applications. Exceptions include \cite{dobler15} and \cite{sunklodas13}, which give Gaussian approximation results for $Y$ in the Wasserstein distance with minimal assumptions on $N$. We note, however, that the results of \cite{sunklodas13} are often not sharp. The approach of D\"obler \cite{dobler15} is based on size biasing and Stein's method, and is therefore closest to our own. However, while \cite{dobler15} uses very general constructions, we will exploit structures that require more restrictive assumptions, but which yield sharper bounds than are otherwise available in cases where these assumptions hold and which may be applied beyond the case where the $X_j$ are independent.

We will assume in our Gaussian approximation results that either $\mathbb{E}[X_1]=0$ (with a general random variable $N$) or that $N$ has a Poisson distribution (with the $X_j$ not necessarily centred). In the case of gamma approximation we will again assume that $N$ is Poisson (again without the assumption that the $X_j$ are centred). In each case, these assumptions will allow us to exploit coupling constructions that would not otherwise hold. It will be clear from the work below where these assumptions are used: in each case, they allow a certain factorization of the variance of $Y$, which in turn gives a factorization of a certain characteristic function which is needed to give the representation we require. The assumption that $\mathbb{E}[X_1]=0$ can be thought of either as each individual summand being centred (by subtracting the mean), or as the entire random sum $Y$ being centred by subtracting a random variable proportional to $N$. For our Poisson approximation results we will not require any such assumptions on $N$ or $X_1$.    

Before detailing our Gaussian, gamma and Poisson approximation results in Sections \ref{sec:normal}--\ref{sec:poisson} below, respectively, we use the remainder of this section to give the definitions of zero biasing and size biasing that we will need. In using these definitions, we will assume without further mention throughout the work that follows that the first three moments of the random variables $X_1$ and $N$ (and therefore also of $Y$) exist and are finite, and that $N$ has positive mean.

\subsection{Size biasing}

For a non-negative random variable $X$ with mean $\mathbb{E}[X]>0$, the size-biased version of $X$, denoted by $X^s$, is defined by
\begin{equation}\label{eq:def_size}
\mathbb{E}g(X^s)=\frac{\mathbb{E}[Xg(X)]}{\mathbb{E}[X]}\,,
\end{equation}
for all functions $g:\mathbb{R}^+\to\mathbb{R}$ for which this expectation exists. If $X$ is supported on the non-negative integers, this is equivalent to writing $\mathbb{P}(X^s=j)=j\mathbb{P}(X=j)/\mathbb{E}[X]$ for all non-negative integers $j$.

In the context of Stein's method, size biasing appears most often in a Poisson approximation setting (see \cite{barbour92}, for example), where the fact that $X$ has a Poisson distribution if and only if $X^s$ has the same distribution as $X+1$ is exploited. Size biasing has also been employed in a Gaussian approximation setting; see \cite{chen11}. In our work, we will take advantage of an explicit construction of the size-biased version of our random sum $Y$ (see Lemma \ref{lem:size} below) in order to establish an explicit error bound in the approximation of $Y$ by a gamma distribution, and our Poisson approximation results will also make use of this same representation. 

\subsection{Zero biasing}

For a real-valued random variable $X$ with $\mathbb{E}[X]=0$ and $\mathbb{E}[X^2]>0$, the zero-biased version of $X$, denoted by $X^z$, is defined by
\begin{equation}\label{eq:def_zero}
\mathbb{E}[g^\prime(X^z)]=\frac{\mathbb{E}[Xg(X)]}{\mathbb{E}[X^2]}\,,
\end{equation}
for all Lipschitz functions $g:\mathbb{R}\to\mathbb{R}$ such that this expectation exists.

This definition was first developed for use with Stein's method for Gaussian approximation, motivated by the fact that $X$ is Gaussian if and only if $X^z$ has the same distribution as $X$. See \cite{chen11} for both historical background and numerous applications of zero biasing in Gaussian approximation.

Two generalizations of zero biasing have been developed by D\"obler \cite{dobler17}, each of which defines a transformation analogous to zero biasing which may be used when $\mathbb{E}[X]\not=0$. For a real-valued random variable $X$ with $\mathbb{E}[X^2]>0$, we define
\begin{itemize}
\item the generalized-zero-biased version of $X$, denoted by $X^{gz}$, which satisfies
\begin{equation}\label{eq:def_gen}
\mathbb{E}[g^\prime(X^{gz})]=\frac{\mathbb{E}[X(g(X)-g(0))]}{\mathbb{E}[X^2]}\,,
\end{equation}
for all Lipschitz functions $g:\mathbb{R}\to\mathbb{R}$ for which this expectation exists.
\item the non-zero-biased version of $X$, denoted by $X^{nz}$, which satisfies
\begin{equation}\label{eq:def_nz}
\mathbb{E}[g^\prime(X^{nz})]=\frac{\mathbb{E}[(X-\mathbb{E}[X])g(X)]}{\mbox{Var}(X)}\,,
\end{equation}
for all Lipschitz functions $g:\mathbb{R}\to\mathbb{R}$ for which this expectation exists.
\end{itemize}  
The existence of such random variables follows from Theorem 1 of \cite{dobler17}. See also Example 1 on page 104 of that paper for discussion of these transformations. We note that in the case that $\mathbb{E}[X]=0$, each reduces to zero biasing.

In our Gaussian approximation results in the case where the $X_j$ are centred, we will make use of a representation of $Y^z$; see Lemma \ref{lem:zero} below. In the case of Gaussian or gamma approximation when $N$ is Poisson, we will need the representation of $Y^{nz}$, the non-zero-biased version of our random sum, given in Lemma \ref{lem:nz} below. This representation will involve the generalized-zero-biased version of one of the summands. 

\section{Gaussian approximation} \label{sec:normal}

In this section we consider the approximation of our random sum by a Gaussian distribution. In Section \ref{sec:normal_zero} we study the case in which $\mathbb{E}[X_1]=0$, but allowing a general random variable $N$. Section \ref{sec:normal_poisson} deals with the case where $N$ is Poisson, but in which we do not require the random variable $X_1$ to be centred. 

\subsection{The zero-mean case} \label{sec:normal_zero}

Throughout this section we let $Y=X_1+\cdots+X_N$, where $N$ is a non-negative, integer-valued random variable with mean $\mathbb{E}[N]>0$, and $X_1,X_2,\ldots$ are exchangeable random variables with mean $\mathbb{E}[X_1]=0$, which are independent of $N$, and which satisfy (\ref{eq:model}). We consider the approximation of $Y$ by a Gaussian random variable $Z\sim\mbox{N}(0,\mbox{Var}(Y))$. In doing so, we will need Lemma \ref{lem:zero} below. The result of this lemma in the independent case ($\rho=0$) is implicit in the proofs of Theorems 2.5 and 2.7 in \cite{dobler15} (see the discussion below (4.8) in that paper), but we emphasise again that an important contribution of this work is to allow $\rho>0$. In this setting, Lemma \ref{lem:zero} is, to the best of our knowledge, new.
\begin{lemma}\label{lem:zero}
Let $N$ be a non-negative, integer-valued random variable, and let $X_1,X_2,\ldots$ be random variables with mean zero which satisfy (\ref{eq:model}) and are independent of $N$. Let $Y=X_1+\cdots+X_N$ and $Y^{z}$ denote its zero-biased version. Let
\begin{equation}\label{eq:tau}
\tau=\frac{\rho\mathbb{E}[N^2]}{\mathbb{E}[N]+\rho\mathbb{E}[N(N-1)]}\,,
\end{equation}
and let $I_\tau$ be a Bernoulli random variable, independent of all else, with $\mathbb{P}(I_\tau=1)=1-\mathbb{P}(I_\tau=0)=\tau$.
Then
\begin{equation}\label{eq:zerolem}
Y^z\stackrel{d}{=}I_\tau(NX_1)^z+(1-I_\tau)\left(X_1^z+\sum_{j=1}^{N^s-1}X^\prime_j\right)\,,
\end{equation}
where $N^s$ is the size-biased version of $N$, the $X^\prime_j$ are IID copies of $X_1$, all random variables on the right-hand side are independent, and ``$\stackrel{d}{=}$'' denotes equality in distribution.
\end{lemma}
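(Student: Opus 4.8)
The plan is to verify the distributional identity in (\ref{eq:zerolem}) by checking the defining property of the zero-biased distribution (\ref{eq:def_zero}): I must show that for every Lipschitz $g$,
\[
\mathbb{E}[g'(R)]=\frac{\mathbb{E}[Yg(Y)]}{\mathbb{E}[Y^2]}\,,
\]
where $R$ denotes the random variable on the right-hand side of (\ref{eq:zerolem}). Since the zero-biased distribution is uniquely determined by this identity, establishing it for all Lipschitz $g$ suffices. First I would compute the left-hand side by conditioning on $I_\tau$: this splits $\mathbb{E}[g'(R)]$ into $\tau\,\mathbb{E}[g'((NX_1)^z)]$ plus $(1-\tau)\,\mathbb{E}[g'(X_1^z+\sum_{j=1}^{N^s-1}X_j')]$, and each piece can be rewritten using the definitions (\ref{eq:def_zero}) and (\ref{eq:def_size}).

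The main work is computing the right-hand side $\mathbb{E}[Yg(Y)]$ and the normalizing variance $\mathbb{E}[Y^2]=\mbox{Var}(Y)$ directly from the model. First I would condition on $N$ and use exchangeability to write $\mathbb{E}[Yg(Y)\mid N=n]=n\,\mathbb{E}[X_1 g(X_1+\cdots+X_n)]$. The key step is to exploit the characteristic-function factorization (\ref{eq:model}), which reflects the fact that the joint law of the summands is a mixture: with probability $\rho$ all summands equal a common value (the comonotone piece), and with probability $1-\rho$ they are independent. This mixture structure should let me split $\mathbb{E}[X_1 g(X_1+\cdots+X_n)]$ into a comonotone contribution, in which $X_1+\cdots+X_n=nX_1$ and the relevant object is $\mathbb{E}[X_1 g(nX_1)]=\frac{1}{n}\mathbb{E}[(nX_1)g(nX_1)]$, and an independent contribution, in which the single biased summand $X_1^z$ gets added to an independent sum of the remaining $n-1$ copies. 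Writing these out and using $\mathbb{E}[X_1]=0$ (so that cross terms in the variance computation vanish) should produce exactly the two pieces appearing on the left-hand side, weighted correctly.

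The delicate point — and the step I expect to be the main obstacle — is tracking how the mixing over $N$ interacts with the size-biasing of $N$ and with the weight $\tau$. After conditioning on $N=n$, the comonotone piece carries a factor proportional to $\rho n^2$ (from the $n\,\mathbb{E}[X_1g(nX_1)]$ contribution combined with the internal $\frac1n$), while the independent piece carries a factor proportional to $n$ (one distinguished summand) times the expectation over the remaining $n-1$ summands. Averaging these over the law of $N$ produces $\rho\,\mathbb{E}[N^2]$ and $\mathbb{E}[N]$ respectively in the relevant places, and the appearance of $N^s$ in the independent piece comes precisely from the reweighting $n\mathbb{P}(N=n)/\mathbb{E}[N]$ inherent in (\ref{eq:def_size}), with the sum running to $N^s-1$ because one summand has been singled out and zero-biased. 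I would then verify that the overall normalization $\mbox{Var}(Y)=\mathbb{E}[N]\mathbb{E}[X_1^2]+\rho\,\mathbb{E}[N(N-1)]\mathbb{E}[X_1^2]=\mathbb{E}[X_1^2]\bigl(\mathbb{E}[N]+\rho\mathbb{E}[N(N-1)]\bigr)$ is exactly the denominator that converts these two averaged contributions into the probabilities $\tau$ and $1-\tau$ defined in (\ref{eq:tau}). Matching these constants against the conditional-on-$I_\tau$ expansion of the left-hand side completes the proof; the bookkeeping of the $\rho\mathbb{E}[N^2]$ versus $\mathbb{E}[N]+\rho\mathbb{E}[N(N-1)]$ terms is where care is most needed.
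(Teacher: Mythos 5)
Your plan is correct and every step you identify does go through; the decomposition you land on is exactly the paper's, but you reach it by a genuinely different technical route. The paper works entirely with characteristic functions: setting all $t_j=t$ in (\ref{eq:model}) gives $\phi_Y(t)=\rho\phi_{NX_1}(t)+(1-\rho)G_N(\phi_{X_1}(t))$, then $\phi_{Y^z}(t)=\phi_Y^\prime(t)/(t\mbox{Var}(Y))$ is differentiated term by term and the two pieces are recognised as $\tau\phi_{(NX_1)^z}(t)$ and $(1-\tau)\phi_{X_1^z}(t)G_{N^s-1}(\phi_{X_1}(t))$ after the variance factorisation (\ref{eq:zerovar}). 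You instead verify the defining identity (\ref{eq:def_zero}) for general Lipschitz $g$, which rests on the stronger (but true) observation that (\ref{eq:model}) makes the joint law of $(X_1,\ldots,X_n)$ a literal mixture: comonotone with probability $\rho$, IID with probability $1-\rho$. Granting that, your bookkeeping is right: the comonotone branch contributes $\rho\,\mbox{Var}(X_1)\mathbb{E}[N^2]\,\mathbb{E}[g^\prime((NX_1)^z)]$ via $n\mathbb{E}[X_1g(nX_1)]=\mathbb{E}[(nX_1)g(nX_1)]$; the independent branch contributes $(1-\rho)\mbox{Var}(X_1)\mathbb{E}[N]\,\mathbb{E}[g^\prime(X_1^z+\sum_{j=1}^{N^s-1}X_j^\prime)]$ via $\mathbb{E}[X_1g(X_1+s)]=\mathbb{E}[X_1^2]\mathbb{E}[g^\prime(X_1^z+s)]$ and the reweighting $n\mathbb{P}(N=n)/\mathbb{E}[N]=\mathbb{P}(N^s=n)$; and dividing by $\mbox{Var}(Y)=\mbox{Var}(X_1)\{\mathbb{E}[N]+\rho\mathbb{E}[N(N-1)]\}$ produces precisely $\tau$ and $1-\tau$, since $1-\tau=(1-\rho)\mathbb{E}[N]/(\mathbb{E}[N]+\rho\mathbb{E}[N(N-1)])$. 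What each approach buys: the paper's argument is shorter because it only ever needs the one-dimensional identity for $\phi_Y$ (your approach specialises to it upon taking $g^\prime(x)=e^{itx}$); yours makes the probabilistic mechanism transparent --- where $N^s$, the distinguished zero-biased summand, and the weight $\tau$ come from --- at the cost of invoking the mixture representation of the full joint law and the uniqueness of the zero-biased distribution.
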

\begin{proof}
For any random variable $X$, let $\phi_X(t)=\mathbb{E}[e^{itX}]$ denote its characteristic function. The model (\ref{eq:model}) gives
\begin{equation}\label{eq:Ycharfn}
\phi_Y(t)=\rho\phi_{NX_1}(t)+(1-\rho)G_N(\phi_{X_1}(t))\,,
\end{equation}
where $G_N(z)=\mathbb{E}[z^N]$ is the probability generating function of $N$. We note that, using the definition (\ref{eq:def_size}),
\begin{equation}\label{eq:sizebiasPGF}
G_{N^s-1}(z)=\frac{\mathbb{E}[Nz^{N-1}]}{\mathbb{E}[N]}=\frac{G^\prime_N(z)}{\mathbb{E}[N]}\,.
\end{equation}
Hence, combining these with the definition (\ref{eq:def_zero}) of zero biasing, we have
\begin{align}
\nonumber\phi_{Y^z}(t)&=\frac{\mathbb{E}[Ye^{itY}]}{it\mbox{Var}(Y)}=\frac{\phi^\prime_Y(t)}{t\mbox{Var}(Y)}=\frac{\rho\phi^\prime_{NX_1}(t)}{t\mbox{Var}(Y)}+\frac{(1-\rho)\phi^\prime_{X_1}(t)G^\prime_N(\phi_{X_1}(t))}{t\mbox{Var}(Y)}\\
\label{eq:lem2.1}&=\frac{\rho\mbox{Var}(NX_1)}{\mbox{Var}(Y)}\phi_{(NX_1)^z}(t)+\frac{(1-\rho)\mbox{Var}(X_1)\mathbb{E}[N]}{\mbox{Var}(Y)}\phi_{X_1^z}(t)G_{N^s-1}(\phi_{X_1}(t))\,.
\end{align}
Straightforward calculations show that $\mbox{Var}(NX_1)=\mbox{Var}(X_1)\mathbb{E}[N^2]$ and, since $\mbox{Cov}(X_j,X_k)=\rho\mbox{Var}(X_1)$ for all $k\not=j$, that
\begin{equation}\label{eq:zerovar}
\mbox{Var}(Y)=\mbox{Var}(X_1)\left\{\mathbb{E}[N]+\rho\mathbb{E}[N(N-1)]\right\}\,.
\end{equation}
Combining these with (\ref{eq:lem2.1}), we have that
\[
\phi_{Y^z}(t)=\tau\phi_{(NX_1)^z}(t)+(1-\tau)\phi_{X_1^z}(t)G_{N^s-1}(\phi_{X_1}(t))\,,
\]
which is the characteristic function of the right-hand side of (\ref{eq:zerolem}). This completes the proof.
\end{proof}

To establish a Gaussian approximation result for $Y$, we may now apply Theorem 4.1 of Chen \emph{et al.} \cite{chen11}, which was established using Stein's method. Letting 
\[
\widetilde{Y}=\frac{Y}{\sqrt{\mbox{Var}(Y)}}=\frac{Y}{\sqrt{\mbox{Var}(X_1)\left\{\mathbb{E}[N]+\rho\mathbb{E}[N(N-1)]\right\}}}
\] 
denote the standardised version of $Y$, and similarly letting $\widetilde{Z}\sim\mbox{N}(0,1)$, Theorem 4.1 of \cite{chen11} gives
\begin{equation}\label{eq:gaussianbd}
d_W(\widetilde{Y},\widetilde{Z})\leq2\mathbb{E}|\widetilde{Y}^z-\widetilde{Y}|\,,
\end{equation}
for any coupling of $\widetilde{Y}^z$ and $\widetilde{Y}$. Combining Lemma \ref{lem:zero} with the fact that $(aX)^z\stackrel{d}{=}aX^z$ for any random variable $X$ and $a\not=0$ (see Equation (2.59) of \cite{chen11}), we have that
\begin{align*}
\mathbb{E}|\widetilde{Y}^z-\widetilde{Y}|&=\frac{1}{\sqrt{\mbox{Var}(Y)}}\mathbb{E}\left|I_\tau(NX_1)^z+(1-I_\tau)\left(X_1^z+\sum_{j=1}^{N^s-1}X^\prime_j\right)-\sum_{k=1}^NX_k\right|\\
&=\frac{1}{\sqrt{\mbox{Var}(Y)}}\left(\tau\mathbb{E}|(NX_1)^z-Y|+(1-\tau)\mathbb{E}\left|X_1^z+\sum_{j=1}^{N^s-1}X_j^\prime-\sum_{k=1}^NX_k\right|\right)\\
&\leq\frac{\tau}{\sqrt{\mbox{Var}(Y)}}\left(\mathbb{E}|(NX_1)^z|+\mathbb{E}|Y|\right)+\frac{1-\tau}{\sqrt{\mbox{Var}(Y)}}\left(\mathbb{E}|X_1^z|+\mathbb{E}\left|\sum_{j=1}^{N^s-1}X_j^\prime-\sum_{k=1}^NX_k\right|\right)\,,
\end{align*}
for any coupling of $N^s$ and $N$.

Using the definition (\ref{eq:def_zero}),
\[
\mathbb{E}|X_1^z|=\frac{\mathbb{E}|X_1|^3}{2\mbox{Var}(X_1)}\,,
\]
and similarly for $\mathbb{E}|(NX_1)^z|$. Then, bounding $\mathbb{E}|Y|\leq\sqrt{\mbox{Var}(Y)}$, we have proved the following.
\begin{theorem}\label{thm:normal}
Let $N$ be a non-negative, integer-valued random variable, and let $X_1,X_2,\ldots$ be random variables with mean zero which satisfy (\ref{eq:model}) and are independent of $N$. Let $Y=X_1+\cdots+X_N$. Then
\begin{multline*}
d_W(\widetilde{Y},\widetilde{Z})\leq2\tau\left(1+\frac{\mathbb{E}[N^3]\mathbb{E}|X_1|^3}{2\mathbb{E}[N^2]\mbox{Var}(X_1)\sqrt{\mbox{Var}(Y)}}\right)\\
+\frac{2(1-\tau)}{\sqrt{\mbox{Var}(Y)}}\left(\frac{\mathbb{E}|X_1|^3}{2\mbox{Var}(X_1)}+\mathbb{E}\left|\sum_{j=1}^{N^s-1}X_j^\prime-\sum_{k=1}^NX_k\right|\right)\,,
\end{multline*}
for any coupling of $N^s$ and $N$, where $\widetilde{Z}\sim\mbox{N}(0,1)$, $\tau$ is defined by (\ref{eq:tau}) and $\mbox{Var}(Y)$ is given by (\ref{eq:zerovar}).
\end{theorem}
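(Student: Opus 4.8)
The plan is to substitute the zero-bias representation of Lemma \ref{lem:zero} into the Stein-method bound (\ref{eq:gaussianbd}), namely $d_W(\widetilde{Y},\widetilde{Z})\leq 2\mathbb{E}|\widetilde{Y}^z-\widetilde{Y}|$, and then estimate the coupling distance $\mathbb{E}|\widetilde{Y}^z-\widetilde{Y}|$ directly. First I would pass to the standardised scale: since $(aX)^z\stackrel{d}{=}aX^z$ for deterministic $a\neq0$, dividing (\ref{eq:zerolem}) through by $\sqrt{\mbox{Var}(Y)}$ realises $\widetilde{Y}^z$ as the same two-component mixture but with each component divided by $\sqrt{\mbox{Var}(Y)}$ and with the weight $\tau$ of (\ref{eq:tau}) unchanged. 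Working on a single probability space that carries $N$, the summands $X_k$ defining $Y$, an independent indicator $I_\tau$, the zero-biased variables $(NX_1)^z$ and $X_1^z$, the size-biased index $N^s$ and fresh copies $X_j'$, I may then compare $\widetilde{Y}^z$ term by term against $\widetilde{Y}=\bigl(\sum_{k=1}^N X_k\bigr)/\sqrt{\mbox{Var}(Y)}$, the key point being that the same $Y=\sum_{k=1}^N X_k$ is subtracted in both branches of the mixture.

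Conditioning on $I_\tau$ then splits $\mathbb{E}|\widetilde{Y}^z-\widetilde{Y}|$ into a $\tau$-weighted contribution from the branch $I_\tau=1$ and a $(1-\tau)$-weighted contribution from $I_\tau=0$. In the first branch I would estimate crudely by the triangle inequality, $\mathbb{E}|(NX_1)^z-Y|\leq\mathbb{E}|(NX_1)^z|+\mathbb{E}|Y|$; in the second I would peel off only the zero-biased term and leave the difference of partial sums intact,
\[
\mathbb{E}\Bigl|X_1^z+\sum_{j=1}^{N^s-1}X_j'-\sum_{k=1}^N X_k\Bigr|\leq\mathbb{E}|X_1^z|+\mathbb{E}\Bigl|\sum_{j=1}^{N^s-1}X_j'-\sum_{k=1}^N X_k\Bigr|\,.
\]
Declining to evaluate this last difference is precisely what lets the resulting bound hold for an arbitrary coupling of $N^s$ and $N$, the freedom advertised in the statement; a judicious coupling can be chosen afterwards to control it.

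It then remains to evaluate the first absolute moments of the two zero-biased terms. Taking $g$ with $g'(x)=|x|$, that is $g(x)=\tfrac12 x|x|$, in the defining relation (\ref{eq:def_zero}) yields the standard identity $\mathbb{E}|X^z|=\mathbb{E}|X|^3/(2\mbox{Var}(X))$. Applied to $X_1$ this gives $\mathbb{E}|X_1^z|=\mathbb{E}|X_1|^3/(2\mbox{Var}(X_1))$, and applied to $NX_1$, together with the independence factorisations $\mathbb{E}|NX_1|^3=\mathbb{E}[N^3]\mathbb{E}|X_1|^3$ and $\mbox{Var}(NX_1)=\mathbb{E}[N^2]\mbox{Var}(X_1)$, it gives $\mathbb{E}|(NX_1)^z|=\mathbb{E}[N^3]\mathbb{E}|X_1|^3/(2\mathbb{E}[N^2]\mbox{Var}(X_1))$. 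Bounding $\mathbb{E}|Y|\leq\sqrt{\mbox{Var}(Y)}$ by Jensen (recall $\mathbb{E}[Y]=0$) and collecting the $\tau$- and $(1-\tau)$-weighted pieces reproduces the two lines of the claimed bound.

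The only genuinely delicate step is the absolute-moment identity: the test function $g(x)=\tfrac12 x|x|$ is not globally Lipschitz, so strictly one should justify the use of (\ref{eq:def_zero}) for it by a truncation and approximation argument, which is legitimate here because the third moment of $X_1$, and hence of $NX_1$, is assumed finite. Beyond this, the theorem is essentially an assembly exercise: the substantive content lives in the mixture representation of Lemma \ref{lem:zero} and in the Stein bound (\ref{eq:gaussianbd}), and the remaining work is bookkeeping with the triangle inequality and the mixture weight $\tau$.
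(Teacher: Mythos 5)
Your proposal is correct and follows essentially the same route as the paper: the Stein bound (\ref{eq:gaussianbd}), the scaling property $(aX)^z\stackrel{d}{=}aX^z$ applied to Lemma \ref{lem:zero}, conditioning on $I_\tau$, the triangle inequality on each branch, the identity $\mathbb{E}|X^z|=\mathbb{E}|X|^3/(2\mbox{Var}(X))$, and $\mathbb{E}|Y|\leq\sqrt{\mbox{Var}(Y)}$. Your remark on justifying the non-Lipschitz test function $g(x)=\tfrac12 x|x|$ by truncation is a careful touch the paper leaves implicit, but it does not change the argument.
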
 
This gives an explicit bound in the Gaussian approximation for $Y$. This is valid for all $\rho\in[0,1]$, though we note that we would expect a good approximation only when $\rho$ is small, as reflected in the first term of the upper bound of Theorem \ref{thm:normal}. To get some insight into the final term of the upper bound, we consider now the special case where $\rho=0$, in which case we can compare the upper bound we obtain with others available in the literature in some simple illustrative examples.

\subsubsection{The independent case}

We consider now the special case where $\rho=0$; that is, the random variables $X_1,X_2,\ldots$ are independent. In this setting, $\tau=0$, $\mbox{Var}(Y)=\mathbb{E}[N]\mbox{Var}(X_1)$, and we may choose $X_j^\prime=X_j$ for all $j$. Our upper bound therefore simplifies to that given in the following corollary.
\begin{corollary}\label{cor:normal}
Let $N$ be a non-negative, integer-valued random variable, and let $X,X_1,X_2,\ldots$ be IID random variables, independent of $N$, and with zero mean. Let $Y=X_1+\cdots+X_N$. Then
\[
d_W(\widetilde{Y},\widetilde{Z})\leq\frac{2}{\sqrt{\mathbb{E}[N]\mbox{Var}(X)}}\left(\frac{\mathbb{E}|X|^3}{2\mbox{Var}(X)}+\mathbb{E}|N+1-N^s|\mathbb{E}|X|\right)\,,
\]
for any coupling of $N^s$ and $N$, where $\widetilde{Z}\sim\mbox{N}(0,1)$.
\end{corollary}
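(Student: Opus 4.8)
The plan is to specialize Theorem \ref{thm:normal} to the case $\rho=0$ and then simplify the single expectation that survives. When $\rho=0$, the definition (\ref{eq:tau}) of $\tau$ has a vanishing numerator, so $\tau=0$ and the entire first term of the bound in Theorem \ref{thm:normal} disappears; simultaneously (\ref{eq:zerovar}) collapses to $\mbox{Var}(Y)=\mathbb{E}[N]\mbox{Var}(X_1)$. What remains of the bound is
$$d_W(\widetilde{Y},\widetilde{Z})\leq\frac{2}{\sqrt{\mathbb{E}[N]\mbox{Var}(X)}}\left(\frac{\mathbb{E}|X|^3}{2\mbox{Var}(X)}+\mathbb{E}\left|\sum_{j=1}^{N^s-1}X_j^\prime-\sum_{k=1}^N X_k\right|\right),$$
so the only work left is to bound the final expectation by $\mathbb{E}|N+1-N^s|\,\mathbb{E}|X|$.

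First I would exploit the freedom in the choice of the IID copies $X_j^\prime$ offered by Lemma \ref{lem:zero}: since the $X_j$ are genuinely independent when $\rho=0$, I take $X_j^\prime=X_j$ for every $j$, so that both sums are built from the single IID sequence $X_1,X_2,\ldots$. Conditioning on the pair $(N,N^s)$ (which is legitimate because the $X_j$ are independent of $N$, and $N^s$ may be taken to depend only on $N$), the difference $\sum_{j=1}^{N^s-1}X_j-\sum_{k=1}^N X_k$ telescopes: after cancelling the common initial terms it equals $\pm$ a sum of exactly $|(N^s-1)-N|=|N+1-N^s|$ of the IID summands.

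Next I would apply the triangle inequality to this conditionally fixed-length sum. Given $(N,N^s)$, the conditional expectation of its absolute value is at most $|N+1-N^s|\,\mathbb{E}|X|$, since each retained summand is an independent copy of $X$ and contributes at most $\mathbb{E}|X|$. Taking expectations over $(N,N^s)$ then yields
$$\mathbb{E}\left|\sum_{j=1}^{N^s-1}X_j-\sum_{k=1}^N X_k\right|\leq\mathbb{E}|N+1-N^s|\,\mathbb{E}|X|,$$
and substituting this into the displayed bound above gives exactly the claimed inequality, valid for any coupling of $N^s$ and $N$.

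The one point requiring genuine care is the coupling bookkeeping in the telescoping step: one must check that with $X_j^\prime=X_j$ the surviving terms are really independent copies of $X$ whose count is governed by $|N+1-N^s|$, treating the cases $N^s-1\geq N$ and $N^s-1<N$ symmetrically, and confirming that the independence of the $X_j$ from $(N,N^s)$ justifies conditioning before invoking the triangle inequality. This is the main (though essentially routine) obstacle; once it is settled, the remaining simplifications are immediate.
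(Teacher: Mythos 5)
Your proposal is correct and follows essentially the same route as the paper: specialize Theorem \ref{thm:normal} to $\rho=0$ (so $\tau=0$ and $\mathrm{Var}(Y)=\mathbb{E}[N]\mathrm{Var}(X)$), take $X_j'=X_j$, and bound the surviving expectation by noting that the difference of the two sums reduces to $\pm$ a sum of $|N+1-N^s|$ summands independent of $(N,N^s)$, whence the triangle inequality gives $\mathbb{E}|N+1-N^s|\,\mathbb{E}|X|$. The paper leaves this last telescoping step implicit, and your write-up simply makes it explicit; the only (immaterial) imprecision is the remark that $N^s$ "may be taken to depend only on $N$" --- all that is needed is that the pair $(N,N^s)$ is jointly independent of the sequence $(X_j)$.
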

To illustrate the upper bound of Corollary \ref{cor:normal}, we consider its application to various random variables $N$. In the following simple examples we can calculate our upper bound explicitly, and compare it with others available for these particular cases in the literature.
\begin{enumerate}
\item Let $N\sim{Po}(\lambda)$ have a Poisson distribution with mean $\lambda>0$. In this case, it is well-known that $N^s\stackrel{d}{=}N+1$ (see Chapter 1 of \cite{barbour92}), and so the bound of Corollary \ref{cor:normal} reduces to
\begin{equation}\label{eq:poeg}
d_W(\widetilde{Y},\widetilde{Z})\leq\frac{\mathbb{E}|X|^3}{\sqrt{\lambda}\mbox{Var}(X)^{3/2}}\,,
\end{equation}
which matches the bound given by Theorem 2 of \cite{shevtsova18_po}.
\item Let $N\sim\mbox{Bin}(n,p)$ have a binomial distribution. In this case, it can be easily checked using (\ref{eq:def_size}) that $N^s-1\sim\mbox{Bin}(n-1,p)$. It follows that $N+1$ is stochastically larger than $N^s$, and so
\begin{equation}\label{eq:bineg}
\mathbb{E}|N+1-N^s|=\mathbb{E}[N+1-N^s]=p\,,
\end{equation}
where we note from (\ref{eq:def_size}) that $\mathbb{E}[N^s]=\mathbb{E}[N^2]/\mathbb{E}[N]$. This gives the bound
\[
d_W(\widetilde{Y},\widetilde{Z})\leq\frac{2}{\sqrt{np\mbox{Var}(X)}}\left(\frac{\mathbb{E}|X|^3}{2\mbox{Var}(X)}+p\mathbb{E}|X|\right)
\]
from Corollary \ref{cor:normal}. This is not as sharp a bound as that given by \cite{shevtsova18_bin}, but outperforms the results of \cite{dobler15} (in terms of the constant in the upper bound) when specialized to the binomial case.
\item Similar comments apply if $N$ has a hypergeometric distribution. It is again the case that $N+1$ is stochastically larger than $N^s$ here (see Section 6.1 of \cite{barbour92}), and so $\mathbb{E}|N+1-N^s|$ can be easily evaluated in terms of the first two moments of $N$. Few authors have explicitly considered such approximation results for hypergeometric sums; one exception is D\"obler \cite{dobler15}, who evaluates his general results in the hypergeometric case. As in the binomial case above, our results outperform those of \cite{dobler15} here.
\item Suppose that $N$ has a mixed Poisson distribution, $N\sim\mbox{Po}(\Lambda)$ for some positive random variable $\Lambda$. In this case it can be easily checked that $N^s-1\sim\mbox{Po}(\Lambda^s)$. Since $\Lambda^s$ is stochastically larger than $\Lambda$, it follows from Theorem 1.A.6 of \cite{shaked07} that $N^s$ is stochastically larger than $N+1$ in this case. Hence,
\begin{equation}\label{eq:mpeg}
\mathbb{E}|N+1-N^s|=\mathbb{E}[N^s-N-1]=\frac{\mbox{Var}(\Lambda)}{\mathbb{E}[\Lambda]}\,.
\end{equation} 
Our Corollary \ref{cor:normal} thus gives
\[
d_W(\widetilde{Y},\widetilde{Z})\leq\frac{2}{\sqrt{\mathbb{E}[\Lambda]\mbox{Var}(X)}}\left(\frac{\mathbb{E}|X|^3}{2\mbox{Var}(X)}+\frac{\mbox{Var}(\Lambda)}{\mathbb{E}[\Lambda]}\mathbb{E}|X|\right).
\]
Shevtsova \cite{shevtsova18_po} also considers approximation of such mixed Poisson random sums, though (as noted above) allows for a possibly non-Gaussian limit. To compare our results with those of \cite{shevtsova18_po}, consider the special case where $\Lambda$ has a gamma distribution with mean $\frac{r(1-p)}{p}$ and variance $\frac{r(1-p)^2}{p^2}$ for some $r>0$ and $p\in(0,1)$, so that $N$ has a negative binomial distribution. For ease of comparison, we will also assume that $\mathbb{E}[X^2]=1$ here. In this case, our bound becomes
\begin{equation}\label{eq:mp_me}
d_W(\widetilde{Y},\widetilde{Z})\leq\frac{1}{\sqrt{r}}\sqrt{\frac{p}{1-p}}\mathbb{E}|X|^3+\frac{2}{\sqrt{r}}\sqrt{\frac{1-p}{p}}\mathbb{E}|X|\,,
\end{equation}
which gives the expected rate $O(r^{-1/2})$ when $p$ is fixed and $r\to\infty$; see also \cite{sunklodas15}, where a bound of the same order but with a considerably larger constant was obtained. On the other hand, Corollary 1 of \cite{shevtsova18_po} gives
\begin{equation}\label{eq:mp_sh}
d_W(\widetilde{Y},\widetilde{Z})\leq\frac{1}{\sqrt{r}}\sqrt{\frac{p}{1-p}}\mathbb{E}|X|^3+\frac{1.0801}{r}\,.
\end{equation}
Again, this gives a bound of order $O(r^{-1/2})$ when $p$ is fixed, but we note that, unlike (\ref{eq:mp_me}), the rate can improve if $p$ varies with $r$. For example, the bound of (\ref{eq:mp_sh}) is of order $O(r^{-1})$ if $p$ is of order $O(r^{-1})$. Finally, we also note that specialising the general results of \cite{shevtsova18_po} to particular cases involves evaluating an integral including the distribution function of $\Lambda$, while in our case we only need to know the first two moments of $\Lambda$, which can be considerably easier to obtain. 
\end{enumerate}
As can be noted from the examples above, the term $\mathbb{E}|N+1-N^s|$ in Corollary \ref{cor:normal} will be small when $N$ is `close to' Poisson. As an alternative approach to such approximations in the case where $N$ is close to Poisson, we could combine the bound (\ref{eq:poeg}) for Poisson sums with Lemma 2 of \cite{shevtsova18_po}, which shows that if $M$ and $N$ are non-negative, integer-valued random variables independent of $X_1,X_2,\ldots$, then
\[
d_W(X_1+\cdots+X_M,X_1+\cdots+X_N)\leq d_W(M,N)\mathbb{E}|X|\,.
\]
Choosing $M\sim\mbox{Po}(\mathbb{E}[N])$ and using the triangle inequality for Wasserstein distance then gives
\[
d_W(\widetilde{Y},\widetilde{Z})\leq\frac{1}{\sqrt{\mbox{Var}(X)\mathbb{E}[N]}}\left(\frac{\mathbb{E}|X|^3}{\mbox{Var}(X)}+d_W(M,N)\mathbb{E}|X|\right)\,.
\]
However, this can be significantly worse than the bound in our Corollary \ref{cor:normal}. For example, if $N\sim\mbox{Bin}(n,p)$ has a binomial distribution, then $\mathbb{E}|N+1-N^s|=p$ but $d_W(N,M)$ is of order $O(p\sqrt{np})$; see page 16 of \cite{barbour92}.

\subsection{The Poisson case} \label{sec:normal_poisson}

Throughout this section we let $Y=X_1+\cdots+X_N$, where $N\sim\mbox{Po}(\lambda)$ has a Poisson distribution with mean $\lambda>0$, and $X_1,X_2,\ldots$ satisfy (\ref{eq:model}). We no longer require the assumption that $X_1$ has zero mean. We will again derive an upper bound in the approximation of $Y$ by a Gaussian random variable $Z\sim\mbox{N}(\mathbb{E}[Y],\mbox{Var}(Y))$. The techniques we use will be similar to those employed in Section \ref{sec:normal_zero}, though with two key differences. Firstly, we shall need an analogue of Lemma \ref{lem:zero} for the non-zero-biased version of $Y$, and we will need to make use of results from Stein's method for Gaussian approximation with a non-zero mean. 

We begin with a representation for $Y^{nz}$, analogous to the representation for $Y^z$ given in Lemma \ref{lem:zero}.
\begin{lemma}\label{lem:nz}
Let $N\sim\mbox{Po}(\lambda)$ for $\lambda>0$, and let $X_1,X_2,\ldots$ be random variables which satisfy (\ref{eq:model}) and are independent of $N$. Let $Y=X_1+\cdots+X_N$ and $Y^{nz}$ denote its non-zero-biased version. Let
\begin{equation}\label{eq:sigma}
\sigma=\frac{\rho(\mathbb{E}[X_1^2]+\lambda\mbox{Var}(X_1))}{\mathbb{E}[X_1^2]+\lambda\rho\mbox{Var}(X_1)}\,,
\end{equation}
and let $I_\sigma$ be a Bernoulli random variable, independent of all else, with $\mathbb{P}(I_\sigma=1)=1-\mathbb{P}(I_\sigma=0)=\sigma$. Then
\[
Y^{nz}\stackrel{d}{=}I_\sigma(NX_1)^{nz}+(1-I_\sigma)\left(X_1^{gz}+\sum_{j=1}^NX_j^\prime\right)\,,
\]
where $X_1^{gz}$ is the generalized-zero-biased version of $X_1$, the $X_j^\prime$ are IID copies of $X_1$, and the random variables on the right-hand side are independent.
\end{lemma}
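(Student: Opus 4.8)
The plan is to follow the same strategy as in the proof of Lemma \ref{lem:zero}: compute the characteristic function $\phi_{Y^{nz}}$ directly from the defining relation (\ref{eq:def_nz}), decompose it using the model (\ref{eq:Ycharfn}), and recognise the result as the characteristic function of the claimed mixture. Writing $\phi_X(t)=\mathbb{E}[e^{itX}]$ and taking $g(x)=e^{itx}$ in (\ref{eq:def_nz}), I would first record that
\[
\phi_{Y^{nz}}(t)=\frac{\mathbb{E}[(Y-\mathbb{E}[Y])e^{itY}]}{it\,\mbox{Var}(Y)}\,,
\]
and express $\phi_{(NX_1)^{nz}}$ analogously via (\ref{eq:def_nz}) and $\phi_{X_1^{gz}}$ via (\ref{eq:def_gen}). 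The latter carries a constant term coming from the $g(0)$ in (\ref{eq:def_gen}), and it is precisely this term that will account for the non-zero mean.

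The simplification specific to the Poisson case is that $G_N(z)=e^{\lambda(z-1)}$ satisfies $G_N^\prime(z)=\lambda G_N(z)$. Differentiating the model identity (\ref{eq:Ycharfn}) therefore gives $\phi_Y^\prime(t)=\rho\phi_{NX_1}^\prime(t)+(1-\rho)\lambda\phi_{X_1}^\prime(t)G_N(\phi_{X_1}(t))$, with no size-biased copy of $N$ appearing; in effect the Poisson identity $N^s-1\stackrel{d}{=}N$ absorbs the size-biasing that was visible in Lemma \ref{lem:zero}. Substituting this into the expression for $\phi_{Y^{nz}}$, and splitting the mean-correction $\mathbb{E}[Y]\phi_Y(t)$ along the same $\rho$ / $(1-\rho)$ decomposition, I would then group the terms into a $\rho$-part built from $NX_1$ and a $(1-\rho)$-part built from $X_1$ and a Poisson sum.

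The heart of the argument is to check that the two groups assemble correctly, and this is the step I expect to require the most care. The arithmetic hinges on the identity $\mathbb{E}[NX_1]=\lambda\mathbb{E}[X_1]=\mathbb{E}[Y]$, which holds because $N$ is independent of the $X_j$: it makes the $\rho$-group equal to $\mathbb{E}[(NX_1-\mathbb{E}[NX_1])e^{itNX_1}]$, i.e. exactly $it\,\mbox{Var}(NX_1)\phi_{(NX_1)^{nz}}(t)$, while in the $(1-\rho)$-group the same mean correction turns $\lambda\mathbb{E}[X_1 e^{itX_1}]-\mathbb{E}[Y]$ into $\lambda\mathbb{E}[X_1(e^{itX_1}-1)]$, which is exactly $it\lambda\mathbb{E}[X_1^2]\phi_{X_1^{gz}}(t)$. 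In other words, the subtraction forced by the non-zero-biasing reproduces the $g(0)$-subtraction built into the generalized-zero-biasing of $X_1$; this matching is what dictates that the second summand is $X_1^{gz}$ rather than a plain zero-biased copy. The residual factor $G_N(\phi_{X_1}(t))=e^{\lambda(\phi_{X_1}(t)-1)}$ multiplying $\phi_{X_1^{gz}}(t)$ is then recognised as the characteristic function of an independent Poisson$(\lambda)$ sum $\sum_{j=1}^N X_j^\prime$.

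Finally I would compute the two resulting prefactors and identify them with $\sigma$ and $1-\sigma$. Using $\mbox{Var}(NX_1)=\lambda(\mathbb{E}[X_1^2]+\lambda\mbox{Var}(X_1))$ together with the Poisson variance formula $\mbox{Var}(Y)=\lambda(\mathbb{E}[X_1^2]+\rho\lambda\mbox{Var}(X_1))$ (both derived as in (\ref{eq:zerovar}) but now without assuming $\mathbb{E}[X_1]=0$), the coefficient of $\phi_{(NX_1)^{nz}}$ is $\rho\,\mbox{Var}(NX_1)/\mbox{Var}(Y)=\sigma$, and a short calculation shows the coefficient of $\phi_{X_1^{gz}}(t)G_N(\phi_{X_1}(t))$ collapses to $(1-\rho)\mathbb{E}[X_1^2]/(\mathbb{E}[X_1^2]+\rho\lambda\mbox{Var}(X_1))=1-\sigma$. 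This identifies $\phi_{Y^{nz}}$ as the characteristic function of $I_\sigma(NX_1)^{nz}+(1-I_\sigma)(X_1^{gz}+\sum_{j=1}^N X_j^\prime)$, completing the proof.
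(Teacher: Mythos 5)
Your proposal is correct and follows essentially the same route as the paper: compute $\phi_{Y^{nz}}$ from (\ref{eq:def_nz}), split it via (\ref{eq:Ycharfn}) using $G_N^\prime=\lambda G_N$, match the $\rho$-group to $(NX_1)^{nz}$ via $\mathbb{E}[NX_1]=\mathbb{E}[Y]$ and the $(1-\rho)$-group to $X_1^{gz}$ via the $g(0)$-subtraction, and identify the prefactors with $\sigma$ and $1-\sigma$ using the factorised form of $\mbox{Var}(Y)$. No gaps.
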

\begin{proof}
We proceed similarly to the proof of Lemma \ref{lem:zero}. Firstly, straightforward calculations show that $\mathbb{E}[Y]=\lambda\mathbb{E}[X_1]$ and
\begin{equation}\label{eq:poisvar}
\mbox{Var}(Y)=\lambda\mathbb{E}[X_1^2]+\lambda^2\rho\mbox{Var}(X_1)\,.
\end{equation}
Now, we use the definition (\ref{eq:def_nz}) of $Y^{nz}$ to write
\[
\phi_{Y^{nz}}(t)=\frac{\mathbb{E}[Ye^{itY}]}{it\mbox{Var}(Y)}-\frac{\mathbb{E}[Y]\mathbb{E}[e^{itY}]}{it\mbox{Var}(Y)}=-\frac{\phi_Y^\prime(t)}{t\mbox{Var}(Y)}-\frac{\mathbb{E}[Y]\phi_Y(t)}{it\mbox{Var}(Y)}\,.
\]
Applying this with the representation (\ref{eq:Ycharfn}) of $\phi_Y$, we obtain
\begin{multline}\label{eq:nzrep}
\phi_{Y^{nz}}(t)=\rho\left(-\frac{\phi_{NX_1}^\prime(t)}{t\mbox{Var}(Y)}-\frac{\mathbb{E}[Y]\phi_{NX_1}(t)}{it\mbox{Var}(Y)}\right)\\
+(1-\rho)\left(-\frac{\phi_{X_1}^\prime(t)G_N^\prime(\phi_{X_1}(t))}{t\mbox{Var}(Y)}-\frac{\mathbb{E}[Y]G_N(\phi_{X_1}(t))}{it\mbox{Var}(Y)}\right)\,.
\end{multline}
Again using the definition (\ref{eq:def_nz}), the first term on the right-hand side of (\ref{eq:nzrep}) becomes
\[
\frac{\rho\mbox{Var}(NX_1)}{\mbox{Var}(Y)}\phi_{{(NX_1)}^{nz}}(t)=\sigma\phi_{{NX_1}^{nz}}(t)\,.
\]
Since $N\sim\mbox{Po}(\lambda)$, we have $G_N(z)=e^{\lambda(z-1)}$ and $G^\prime_N(z)=\lambda G_N(z)$, so the final term on the right-hand side of (\ref{eq:nzrep}) becomes
\begin{align*}
(1-\rho)\left(-\frac{\lambda\phi_{X_1}^\prime(t)}{t\mbox{Var}(Y)}-\frac{\mathbb{E}[Y]}{it\mbox{Var}(Y)}\right)G_N(\phi_{X_1}(t))&=\frac{\lambda(1-\rho)\mathbb{E}[X_1^2]}{\mbox{Var}(Y)}\phi_{X_1^{gz}}(t)G_N(\phi_{X_1}(t))\\
&=(1-\sigma)\phi_{X_1^{gz}}(t)G_N(\phi_{X_1}(t))\,,
\end{align*}
where we use the definition (\ref{eq:def_gen}) of $X_1^{gz}$. The required result follows.
\end{proof}
\begin{remark}
\emph{We note that Lemmas \ref{lem:zero} and \ref{lem:nz} are consistent, in that they both give the same result when specialised to the case where $N\sim\mbox{Po}(\lambda)$ and $\mathbb{E}[X_1]=0$. Consider, for simplicity, the case $\rho=0$, where we can let $X_j^\prime=X_j$ for all $j$. Comparing these two lemmas, it is natural to conjecture that a representation such as $Y^{nz}\stackrel{d}{=}X_1^{gz}+\sum_{j=1}^{N^s-1}X_j$ holds in general. We have, however, been unable to prove such a result, which would include both Lemmas \ref{lem:zero} and \ref{lem:nz} (when $\rho=0$) as special cases. The proofs of Lemmas \ref{lem:zero} and \ref{lem:nz} both rely on factorisations of $\mbox{Var}(Y)$ into a product of moments of $N$ and moments of $X_1$. Such a factorisation does not hold in the general case, and we have been unable to overcome the need for such a factorisation in the proofs.}
\end{remark}
We now use Lemma \ref{lem:nz} to derive a Gaussian approximation result for $Y$, again using Stein's method. In this we cannot employ (\ref{eq:gaussianbd}) directly, but may follow the proof of that bound given in \cite{chen11} to derive an analogous result for approximation by a non-standard Gaussian distribution.

For a given function $h:\mathbb{R}\to\mathbb{R}$, we let $f:\mathbb{R}\to\mathbb{R}$ be the solution to
\begin{equation}\label{eq:gausssteineq_nonzero}
h(x)-\mathbb{E}h(Z)=\mbox{Var}(Y)f^\prime(x)-(x-\mathbb{E}[Y])f(x)\,,
\end{equation}
where we recall that $Z\sim\mbox{N}(\mathbb{E}[Y],\mbox{Var}(Y))$. Following the proof of the final inequality in Lemma 2.4 of \cite{chen11}, it is straightforward to show that if $h$ is absolutely continuous, then $\|f^{\prime\prime}\|\leq2[\mbox{Var}(Y)]^{-1}\|h^\prime\|$. Replacing $x$ by the random variable $Y$ in (\ref{eq:gausssteineq_nonzero}), taking expectations, using (\ref{eq:def_nz}), and applying Lemma \ref{lem:nz}, we may then write, for $h\in\mathcal{H}_W$,
\begin{align}
\nonumber|\mathbb{E}h(Y)-\mathbb{E}h(Z)|&=\mbox{Var}(Y)|\mathbb{E}f^\prime(Y)-\mathbb{E}f^\prime(Y^{nz})|\leq2\mathbb{E}|Y^{nz}-Y|\\
\nonumber&=2\mathbb{E}\left|I_\sigma(NX_1)^{nz}+(1-I_\sigma)\left(X_1^{gz}+\sum_{j=1}^NX_j^\prime\right)-\sum_{k=1}^NX_k\right|\\
\nonumber&=2\sigma\mathbb{E}\left|(NX_1)^{nz}-\sum_{k=1}^NX_k\right|+2(1-\sigma)\mathbb{E}\left|X_1^{gz}+\sum_{j=1}^N(X_j^\prime-X_j)\right|\\
\label{eq:gaussexp}&\leq2\sigma\left(\mathbb{E}|(NX_1)^{nz}|+\mathbb{E}|Y|\right)+2(1-\sigma)\mathbb{E}|X_1^{gz}|+2\lambda(1-\sigma)\mathbb{E}|X_1^\prime-X_1|\,.
\end{align}
We may bound 
\[
\mathbb{E}|Y|\leq\sqrt{\mathbb{E}[Y^2]}=\sqrt{\lambda(1+\lambda\rho)\mathbb{E}[X_1^2]+\lambda^2(1-\rho)\mathbb{E}[X_1]^2}\,.
\]
From the definition (\ref{eq:def_gen}), we have that
\[
\mathbb{E}|X_1^{gz}|=\frac{\mathbb{E}|X_1|^3}{2\mathbb{E}[X_1^2]}\,.
\]
We can use the definition (\ref{eq:def_nz}) to compute 
\begin{align*}
\mathbb{E}|(NX_1)^{nz}|&=\frac{\mathbb{E}[N^3]\mathbb{E}|X_1|^3-\mathbb{E}[N]\mathbb{E}[N^2]\mathbb{E}[X_1]\mathbb{E}[X_1^2\mbox{sgn}(X_1)]}{2\mbox{Var}(NX_1)}\\
&=\frac{(\lambda^2+3\lambda+1)\mathbb{E}|X_1|^3-\lambda(\lambda+1)\mathbb{E}[X_1]\mathbb{E}[X_1^2\mbox{sgn}(X_1)]}{2\left(\mathbb{E}[X_1^2]+\lambda\mbox{Var}(X_1)\right)}\,,
\end{align*}
where $\mbox{sgn}(x)=1$ if $x\geq0$, and $\mbox{sgn}(x)=-1$ otherwise. 
 
Letting $\widetilde{Y}$ and $\widetilde{Z}$ denote the standardised versions of $Y$ and $Z$, respectively, as in Section \ref{sec:normal_zero}, we have that $d_W(Y,Z)=\sqrt{\mbox{Var}(Y)}d_W(\widetilde{Y},\widetilde{Z})$. Hence, from (\ref{eq:gaussexp}) we obtain the following result.
\begin{theorem}\label{thm:normal_poisson}
Let $N\sim\mbox{Po}(\lambda)$ for $\lambda>0$, and let $X_1,X_2,\ldots$ be random variables which satisfy (\ref{eq:model}) and are independent of $N$. Let $Y=X_1+\cdots+X_N$. Then
\begin{equation}\label{eq:normal_poisson}
d_W(\widetilde{Y},\widetilde{Z})\leq\frac{1}{{\sqrt{\mbox{Var}(Y)}}}\left(2\sigma\alpha+\frac{(1-\sigma)\mathbb{E}|X_1|^3}{\mathbb{E}[X_1^2]}+2\lambda(1-\sigma)\mathbb{E}|X_1^\prime-X_1|\right)\,,
\end{equation}
where $\widetilde{Z}\sim\mbox{N}(0,1)$, $\sigma$ is defined by (\ref{eq:sigma}), $\mbox{Var}(Y)$ is given by (\ref{eq:poisvar}), and
\begin{multline*}
\alpha=\sqrt{\lambda(1+\lambda\rho)\mathbb{E}[X_1^2]+\lambda^2(1-\rho)\mathbb{E}[X_1]^2}\\
+\frac{(\lambda^2+3\lambda+1)\mathbb{E}|X_1|^3-\lambda(\lambda+1)\mathbb{E}[X_1]\mathbb{E}[X_1^2\mbox{sgn}(X_1)]}{2\left(\mathbb{E}[X_1^2]+\lambda\mbox{Var}(X_1)\right)}\,.
\end{multline*}
\end{theorem}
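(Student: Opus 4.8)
The plan is to combine the representation of $Y^{nz}$ from Lemma \ref{lem:nz} with Stein's method for Gaussian approximation with a non-zero mean, in exact parallel with the argument that produced Theorem \ref{thm:normal} from Lemma \ref{lem:zero}. Since $d_W(Y,Z)=\sqrt{\mbox{Var}(Y)}\,d_W(\widetilde{Y},\widetilde{Z})$ for $Z\sim\mbox{N}(\mathbb{E}[Y],\mbox{Var}(Y))$, it suffices to bound $d_W(Y,Z)$ and then divide through by $\sqrt{\mbox{Var}(Y)}$ at the end.

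First I would fix $h\in\mathcal{H}_W$ and pass to the Stein equation (\ref{eq:gausssteineq_nonzero}) for the target $\mbox{N}(\mathbb{E}[Y],\mbox{Var}(Y))$. Adapting the proof of the corresponding estimate in Lemma 2.4 of \cite{chen11} from the standard to the non-standard Gaussian (essentially by rescaling), the solution $f$ satisfies $\|f^{\prime\prime}\|\leq 2[\mbox{Var}(Y)]^{-1}\|h^\prime\|\leq 2[\mbox{Var}(Y)]^{-1}$. Replacing $x$ by $Y$ in (\ref{eq:gausssteineq_nonzero}), taking expectations, and using the defining property (\ref{eq:def_nz}) to rewrite $\mathbb{E}[(Y-\mathbb{E}[Y])f(Y)]=\mbox{Var}(Y)\,\mathbb{E}[f^\prime(Y^{nz})]$, the difference collapses to $\mbox{Var}(Y)\,|\mathbb{E}f^\prime(Y)-\mathbb{E}f^\prime(Y^{nz})|$, which is at most $\mbox{Var}(Y)\|f^{\prime\prime}\|\,\mathbb{E}|Y^{nz}-Y|\leq 2\mathbb{E}|Y^{nz}-Y|$. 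This is the non-standard analogue of (\ref{eq:gaussianbd}).

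Next I would insert the representation of $Y^{nz}$ from Lemma \ref{lem:nz} into $\mathbb{E}|Y^{nz}-Y|$ and choose a convenient coupling with $Y=\sum_{k=1}^N X_k$: using the same $N$ throughout, so that on the event $\{I_\sigma=0\}$ the two sums of summands pair termwise as $\sum_{j=1}^N(X_j^\prime-X_j)$. Conditioning on $I_\sigma$ and applying the triangle inequality then splits the expectation into a $\sigma$-weighted piece, bounded by $\mathbb{E}|(NX_1)^{nz}|+\mathbb{E}|Y|$, and a $(1-\sigma)$-weighted piece, in which $\mathbb{E}|\sum_{j=1}^N(X_j^\prime-X_j)|\leq\lambda\,\mathbb{E}|X_1^\prime-X_1|$ by exchangeability and a Wald-type identity (conditioning on $N$ and using $\mathbb{E}[N]=\lambda$), together with $\mathbb{E}|X_1^{gz}|$. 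This produces exactly (\ref{eq:gaussexp}).

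It remains to evaluate the three expectations in (\ref{eq:gaussexp}) explicitly, and I expect the computation of $\mathbb{E}|(NX_1)^{nz}|$ to be the main obstacle. The clean device is to apply each bias characterization to the test function $g(x)=\frac12 x|x|$, whose derivative is $|x|$: from (\ref{eq:def_gen}) this gives $\mathbb{E}|X_1^{gz}|=\mathbb{E}|X_1|^3/(2\mathbb{E}[X_1^2])$, and from (\ref{eq:def_nz}) applied to $X=NX_1$ it gives $\mathbb{E}|(NX_1)^{nz}|=(\mathbb{E}|NX_1|^3-\mathbb{E}[NX_1]\,\mathbb{E}[(NX_1)^2\mbox{sgn}(NX_1)])/(2\mbox{Var}(NX_1))$. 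The delicate points are that $\mbox{sgn}(NX_1)=\mbox{sgn}(X_1)$ on $\{N>0\}$ and $|NX_1|^3=N^3|X_1|^3$ since $N\geq0$, which after invoking independence of $N$ and $X_1$ let me factor the expression into moments of $N$ and of $X_1$; inserting the Poisson moments $\mathbb{E}[N]=\lambda$, $\mathbb{E}[N^2]=\lambda+\lambda^2$, $\mathbb{E}[N^3]=\lambda+3\lambda^2+\lambda^3$ together with $\mbox{Var}(NX_1)=\lambda(\mathbb{E}[X_1^2]+\lambda\mbox{Var}(X_1))$ yields the stated formula after the common factor $\lambda$ cancels. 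Finally, bounding $\mathbb{E}|Y|\leq\sqrt{\mathbb{E}[Y^2]}$ with $\mathbb{E}[Y^2]=\mbox{Var}(Y)+(\mathbb{E}[Y])^2=\lambda(1+\lambda\rho)\mathbb{E}[X_1^2]+\lambda^2(1-\rho)\mathbb{E}[X_1]^2$ (from (\ref{eq:poisvar}) and $\mathbb{E}[Y]=\lambda\mathbb{E}[X_1]$), collecting the two $\sigma$-weighted terms into $\alpha$, and dividing by $\sqrt{\mbox{Var}(Y)}$ gives (\ref{eq:normal_poisson}).
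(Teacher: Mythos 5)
Your proposal follows the paper's own proof essentially step for step: the same non-standard Stein equation with the bound $\|f^{\prime\prime}\|\leq2[\mbox{Var}(Y)]^{-1}\|h^\prime\|$ adapted from Lemma 2.4 of \cite{chen11}, the same insertion of Lemma \ref{lem:nz} with the coupling that shares $N$ between $Y$ and $Y^{nz}$ leading exactly to (\ref{eq:gaussexp}), and the same explicit evaluations of $\mathbb{E}|X_1^{gz}|$, $\mathbb{E}|(NX_1)^{nz}|$ and $\mathbb{E}|Y|\leq\sqrt{\mathbb{E}[Y^2]}$ before rescaling by $\sqrt{\mbox{Var}(Y)}$. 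The only cosmetic difference is that you make explicit the test function $g(x)=\tfrac12 x|x|$ and the factorization $\mbox{sgn}(NX_1)=\mbox{sgn}(X_1)$ on $\{N>0\}$ behind the moment formulas, which the paper states without derivation; there is no gap.
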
 
As above, the upper bound of Theorem \ref{thm:normal_poisson} simplifies considerably in the case where $\rho=0$, i.e., the case where the $X_j$ are independent. In that case, $\sigma=0$ and we may choose $X_1^\prime=X_1$, so that both the first and final terms in the upper bound (\ref{eq:normal_poisson}) vanish. This leaves us with the upper bound
\[
d_{W}(\widetilde{Y},\widetilde{Z})\leq\frac{\mathbb{E}|X_1|^3}{\sqrt{\lambda}\mathbb{E}[X_1^2]^{3/2}}\,,
\] 
which was recently established by Shevtsova using different techniques in Theorem 2 of \cite{shevtsova18_po}.
 
\section{Gamma approximation in the Poisson case} \label{sec:gamma}

Throughout this section we let $Y=X_1+\cdots+X_N$, where $N\sim\mbox{Po}(\lambda)$ has a Poisson distribution with mean $\lambda>0$, and $X_1,X_2,\ldots$ are non-negative random variables (with $\mathbb{E}[X_1]>0)$, satisfying (\ref{eq:model}), and which are independent of $N$. We consider the approximation of $Y$ by a gamma random variable $Z\sim\Gamma(r,s)$ with density function given by $\frac{s^r}{\Gamma(r)}x^{r-1}e^{-sx}$ for $x>0$, where $\Gamma(\cdot)$ denotes the gamma function, and where $r>0$ and $s>0$ are chosen such that $\mathbb{E}[Y]=\mathbb{E}[Z]=r/s$ and $\mbox{Var}(Y)=\mbox{Var}(Z)=r/s^2$. Specifically,
\begin{equation}\label{eq:params}
r=\frac{\lambda\mathbb{E}[X_1]^2}{\mathbb{E}[X_1^2]+\lambda\rho\mbox{Var}(X_1)}\,,\qquad s=\frac{\mathbb{E}[X_1]}{\mathbb{E}[X_1^2]+\lambda\rho\mbox{Var}(X_1)}\,.
\end{equation}

Our approach here is motivated by our Gaussian approximation results above. We will again use Lemma \ref{lem:nz}, and will also need an analogous representation of $Y^s$ as given in Lemma \ref{lem:size} below. Note that the result of Lemma \ref{lem:size} in the special case where $\rho=0$ has been established by Arratia \emph{et al.} \cite{arratia19} (see their Section 2.2.2). Here we extend the result to cover $\rho>0$. We will state a more general version of this lemma than we will need in this section, since this more general version (without the assumption that $N$ is Poisson) will be useful in Section \ref{sec:poisson}.  
\begin{lemma}\label{lem:size}
Let $N$ be a non-negative, integer-valued random variable, and let $X_1,X_2,\ldots$ be non-negative random variables which satisfy (\ref{eq:model}) and are independent of $N$. Let $Y=X_1+\cdots+X_N$ and $Y^{s}$ denote its size-biased version. Let $I_\rho$ be a Bernoulli random variable, independent of all else, with $\mathbb{P}(I_\rho=1)=1-\mathbb{P}(I_\rho=0)=\rho$. Then
\begin{equation}\label{eq:sblem1}
Y^s\stackrel{d}{=}I_\rho(NX_1)^s+(1-I_\rho)\left(X_1^s+\sum_{j=1}^{N^s-1}X_j^\prime\right)\,,
\end{equation}
where the $X_j^\prime$ are IID copies of $X_1$, and all random variables on the right-hand side are independent.

In particular, if $N\sim\mbox{Po}(\lambda)$ for $\lambda>0$ then
\begin{equation}\label{eq:sblem2}
Y^s\stackrel{d}{=}I_\rho(NX_1)^s+(1-I_\rho)(X_1^s+Y^\prime)\,,
\end{equation}
where $Y^\prime=X_1^\prime+\cdots+X_N^\prime$, and the random variables on the right-hand side are independent.
\end{lemma}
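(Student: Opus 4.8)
The plan is to mimic the characteristic-function argument used in the proof of Lemma \ref{lem:zero}, replacing zero biasing by size biasing throughout. First I would use the defining relation (\ref{eq:def_size}) with $g(x)=e^{itx}$ to express the characteristic function of $Y^s$ in terms of the derivative of $\phi_Y$: since $\phi_Y^\prime(t)=i\mathbb{E}[Ye^{itY}]$, the definition (\ref{eq:def_size}) gives $\phi_{Y^s}(t)=\mathbb{E}[Ye^{itY}]/\mathbb{E}[Y]=\phi_Y^\prime(t)/(i\mathbb{E}[Y])$, exactly as $\phi_{Y^z}$ was related to $\phi_Y^\prime$ in the earlier lemma. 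Next I would differentiate the representation (\ref{eq:Ycharfn}) of $\phi_Y$ to obtain $\phi_Y^\prime(t)=\rho\phi_{NX_1}^\prime(t)+(1-\rho)\phi_{X_1}^\prime(t)G_N^\prime(\phi_{X_1}(t))$. Applying the size-biasing relation (\ref{eq:def_size}) to each of $NX_1$ and $X_1$ (so that $\phi_{NX_1}^\prime(t)=i\mathbb{E}[NX_1]\phi_{(NX_1)^s}(t)$ and $\phi_{X_1}^\prime(t)=i\mathbb{E}[X_1]\phi_{X_1^s}(t)$), together with the identity (\ref{eq:sizebiasPGF}) for $G_N^\prime$, turns this into a linear combination of the required factors.

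The crucial simplification is that, by the independence of $N$ and the $X_j$, the first moment factorises as $\mathbb{E}[Y]=\mathbb{E}[N]\mathbb{E}[X_1]=\mathbb{E}[NX_1]$; dividing through by $i\mathbb{E}[Y]$ then cancels all the mean factors and yields
\[
\phi_{Y^s}(t)=\rho\phi_{(NX_1)^s}(t)+(1-\rho)\phi_{X_1^s}(t)G_{N^s-1}(\phi_{X_1}(t))\,.
\]
Since $G_{N^s-1}(\phi_{X_1}(t))$ is the characteristic function of $\sum_{j=1}^{N^s-1}X_j^\prime$ for IID copies $X_j^\prime$ of $X_1$ (on conditioning on $N^s-1$ and using mutual independence), the right-hand side above is precisely the characteristic function of the mixture on the right of (\ref{eq:sblem1}), which establishes the general claim.

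I do not anticipate a genuine obstacle in the first part; the work is purely in tracking the constants correctly. It is worth emphasising, however, why no further structure on $N$ is needed here, in contrast to Lemma \ref{lem:nz}: size biasing only requires the factorisation of the first moment $\mathbb{E}[Y]=\mathbb{E}[N]\mathbb{E}[X_1]$, which always holds by independence, whereas the zero- and non-zero-biased versions rely on factorising $\mbox{Var}(Y)$, which fails in general and forces either $\mathbb{E}[X_1]=0$ or $N$ Poisson. For the Poisson specialisation (\ref{eq:sblem2}) I would then simply invoke the standard fact that $N^s\stackrel{d}{=}N+1$ when $N\sim\mbox{Po}(\lambda)$ (see \cite{barbour92}), so that $N^s-1\stackrel{d}{=}N$ and hence $\sum_{j=1}^{N^s-1}X_j^\prime\stackrel{d}{=}X_1^\prime+\cdots+X_N^\prime=Y^\prime$, a Poisson random sum of IID copies of $X_1$ independent of the remaining terms. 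Substituting this into (\ref{eq:sblem1}) gives (\ref{eq:sblem2}).
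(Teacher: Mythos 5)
Your proof is correct and follows essentially the same route as the paper's: relate $\phi_{Y^s}$ to $\phi_Y^\prime/(i\mathbb{E}[Y])$ via (\ref{eq:def_size}), differentiate (\ref{eq:Ycharfn}), apply (\ref{eq:sizebiasPGF}), and use $\mathbb{E}[Y]=\mathbb{E}[N]\mathbb{E}[X_1]=\mathbb{E}[NX_1]$ to cancel the constants, then invoke $N^s-1\stackrel{d}{=}N$ for the Poisson case. Your additional remark on why only the first-moment factorisation is needed here (in contrast to the variance factorisation required for Lemmas \ref{lem:zero} and \ref{lem:nz}) is accurate and matches the paper's own discussion.
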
 
\begin{proof}
Our starting point is again the representation (\ref{eq:Ycharfn}), and the definition (\ref{eq:def_size}) which gives $\phi_{Y^s}(t)=(i\mathbb{E}[Y])^{-1}\phi_Y^\prime(t)$. Combining these we get
\begin{align*}
\phi_{Y^s}(t)&=\frac{1}{i\mathbb{E}[Y]}\left(\rho\phi_{NX_1}^\prime(t)+(1-\rho)\phi^\prime_{X_1}(t)G_N^\prime(\phi_{X_1}(t))\right)\\
&=\rho\phi_{(NX_1)^s}(t)+(1-\rho)\phi_{X_1^s}(t)G_{N^s-1}(\phi_{X_1}(t))\,,
\end{align*}
where the second equality follows from (\ref{eq:sizebiasPGF}). The representation (\ref{eq:sblem1}) follows. Finally, (\ref{eq:sblem2}) follows from the fact that if $N\sim\mbox{Po}(\lambda)$ then $N^s-1\stackrel{d}{=}N$.
\end{proof}
As in the Gaussian case above, we use Stein's method to derive a gamma approximation result for $Y$. Stein's method for gamma approximation was first developed by Luk \cite{luk94}. For more recent developments, see \cite{gaunt17} and references therein. We also note that the gamma distribution is a limiting case of the variance-gamma distribution, for which Stein's method was first developed by Gaunt \cite{gaunt13}. For more recent work in this area, see \cite{gaunt20} and references therein.  

Following, for example, \cite{luk94}, for a given function $h:\mathbb{R}^+\to\mathbb{R}$, we let $f:\mathbb{R}^+\to\mathbb{R}$ be the solution to 
\begin{equation}\label{eq:gammastein}
h(x)-\mathbb{E}h(Z)=xf^\prime(x)+(r-sx)f(x)\,,
\end{equation}
where we recall that $Z\sim\Gamma(r,s)$ and that the parameters $r$ and $s$ are chosen so that the first two moments of $Z$ match those of $Y$.

Replacing $x$ with the random variable $Y$ and taking expectations in (\ref{eq:gammastein}), and then using the definitions (\ref{eq:def_size}) and (\ref{eq:def_nz}), we have
\begin{equation}\label{eq:gammastein2}
\mathbb{E}h(Y)-\mathbb{E}h(Z)=\mathbb{E}[Y]\left\{\mathbb{E}f^\prime(Y^s)-\mathbb{E}f^\prime(Y^{nz})\right\}\,.
\end{equation} 
As a final ingredient from Stein's method, we will need a bound on the function $f$. Equation (1.8) of \cite{gaunt17} gives us that, for $h:\mathbb{R}^+\to\mathbb{R}$ differentiable with $h^\prime$ absolutely continuous and $\max\{|h(x)|,|h^\prime(x)|\}<ce^{ax}$ for some $c>0$ and $a<s$, we have 
\begin{equation}\label{eq:steinfactor}
\|f^{\prime\prime}\|\leq c_r\|h^{\prime\prime}\|\,,\mbox{ where }c_r=\frac{\sqrt{2\pi}+e^{-1}}{\sqrt{r+2}}+\frac{2}{r+2}\,.
\end{equation}

Letting $h_a(x)=(x-a)_+$ for $a\in\mathbb{R}^+$, we can use (\ref{eq:gammastein2}) to write
\[
d_{SL}(Y,Z)\leq\mathbb{E}[Y]\sup_{a\in\mathbb{R}^+}|\mathbb{E}f_a^\prime(Y^s)-\mathbb{E}f_a^\prime(Y^{nz})|\,,
\]
where $f_a$ is the solution to (\ref{eq:gammastein}) for the test function $h=h_a$. Unfortunately, the functions $h_a$ are not sufficiently smooth to apply the bound (\ref{eq:steinfactor}) directly, and so we use the following smoothing lemma. For any $\varepsilon>0$, we let $h_{a,\varepsilon}(x)=\mathbb{E}h_a(x+U_\varepsilon)$, where $U_\varepsilon\sim\mbox{U}(0,\varepsilon)$ is uniformly distributed on the interval $(0,\varepsilon)$.
\begin{lemma}\label{lem:smooth}
For any $\varepsilon>0$,
\[
d_{SL}(Y,Z)\leq\varepsilon+\sup_{a\in\mathbb{R}^+}|\mathbb{E}h_{a,\varepsilon}(Y)-\mathbb{E}h_{a,\varepsilon}(Z)|\,.
\]
\end{lemma}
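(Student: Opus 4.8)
The plan is to exploit the fact that $h_{a,\varepsilon}$ is a smoothed version of $h_a$ which sandwiches it from above at the cost of a controlled error of order $\varepsilon$. First I would write the smoothing explicitly as
\[
h_{a,\varepsilon}(x)=\mathbb{E}h_a(x+U_\varepsilon)=\frac{1}{\varepsilon}\int_0^\varepsilon (x+u-a)_+\,du\,,
\]
and record the elementary sandwich $h_a(x)\le h_{a,\varepsilon}(x)\le h_a(x)+\frac{\varepsilon}{2}$, valid for every $x$. The lower bound holds because $u\mapsto(x+u-a)_+$ is non-decreasing, so its average over $(0,\varepsilon)$ is at least its value at $u=0$, namely $(x-a)_+=h_a(x)$. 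The upper bound holds because $y\mapsto y_+$ is $1$-Lipschitz, giving $(x+u-a)_+\le (x-a)_++u$, so that averaging over $u\in(0,\varepsilon)$ adds at most $\mathbb{E}[U_\varepsilon]=\varepsilon/2$.

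Next I would take expectations in the sandwich under both $Y$ and $Z$, obtaining $\mathbb{E}h_a(Y)\le\mathbb{E}h_{a,\varepsilon}(Y)\le\mathbb{E}h_a(Y)+\varepsilon/2$ and the analogous chain for $Z$. Subtracting these and applying the triangle inequality yields, for each fixed $a$,
\[
|\mathbb{E}h_a(Y)-\mathbb{E}h_a(Z)|\le|\mathbb{E}h_{a,\varepsilon}(Y)-\mathbb{E}h_{a,\varepsilon}(Z)|+\frac{\varepsilon}{2}\,.
\]
Taking the supremum over $a$ and bounding $\varepsilon/2\le\varepsilon$ then produces the claimed inequality (indeed with the slightly sharper constant $\varepsilon/2$).

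The only point requiring any care --- and the nearest thing to an obstacle --- is matching the index set of the supremum to the definition of $d_{SL}$, whose supremum runs over all $a\in\mathbb{R}$, whereas the statement restricts to $a\in\mathbb{R}^+$. Here I would observe that, because $Y$ and $Z$ are both non-negative and have been constructed to share the same mean $\mathbb{E}[Y]=\mathbb{E}[Z]=r/s$, we have $(Y-a)_+=Y-a$ and $(Z-a)_+=Z-a$ whenever $a\le0$, so that $\mathbb{E}(Y-a)_+-\mathbb{E}(Z-a)_+=\mathbb{E}[Y]-\mathbb{E}[Z]=0$ on that range. Consequently $d_{SL}(Y,Z)=\sup_{a\in\mathbb{R}^+}|\mathbb{E}h_a(Y)-\mathbb{E}h_a(Z)|$, so the supremum in the displayed bound may legitimately be restricted to $\mathbb{R}^+$, completing the argument. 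Everything else is routine and involves no genuine difficulty.
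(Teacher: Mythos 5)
Your proof is correct, and it takes a more elementary and self-contained route than the paper. The paper splits $d_{SL}(Y,Z)$ by the triangle inequality into the smoothed difference plus the two terms $d_{SL}(Y,Y+U_\varepsilon)$ and $d_{SL}(Z,Z+U_\varepsilon)$, and then invokes Corollary 4 of Lef\`evre and Utev \cite{lefevre98} to bound each of these by $\varepsilon/2$, for a total smoothing cost of $\varepsilon$. You instead prove the pointwise sandwich $h_a\leq h_{a,\varepsilon}\leq h_a+\varepsilon/2$ directly by monotonicity and the Lipschitz property of $y\mapsto y_+$, which removes the external citation; moreover, because the smoothing error has the same sign for $Y$ as for $Z$ and therefore partially cancels upon subtraction, you obtain the sharper constant $\varepsilon/2$ in place of $\varepsilon$. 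If carried through the subsequent optimisation over $\varepsilon$, this would replace the leading factor $2$ in Theorem \ref{thm:gamma} by $\sqrt{2}$. Your final observation --- that the supremum defining $d_{SL}$ over $a\in\mathbb{R}$ may be restricted to $a\in\mathbb{R}^+$ because $Y,Z\geq0$ and $\mathbb{E}[Y]=\mathbb{E}[Z]$ force $\mathbb{E}(Y-a)_+-\mathbb{E}(Z-a)_+=0$ for $a\leq0$ --- addresses a point the paper passes over silently, and is worth making explicit.
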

\begin{proof}
Using the triangle inequality,
\begin{multline*}
d_{SL}(Y,Z)\leq\sup_{a\in\mathbb{R}^+}|\mathbb{E}h_a(Y)-\mathbb{E}h_{a,\varepsilon}(Y)|\\
+\sup_{a\in\mathbb{R}^+}|\mathbb{E}h_{a,\varepsilon}(Y)-\mathbb{E}h_{a,\varepsilon}(Z)|+\sup_{a\in\mathbb{R}^+}|\mathbb{E}h_{a,\varepsilon}(Z)-\mathbb{E}h_a(Z)|\,.
\end{multline*}
The first and final terms on the right-hand side are $d_{SL}(Y,Y+U_\varepsilon)$ and $d_{SL}(Z,Z+U_\varepsilon)$, respectively. By Corollary 4 of \cite{lefevre98}, each of these is at most $\varepsilon/2$. The result follows.
\end{proof}

Now, since $h_{a,\varepsilon}^{\prime\prime}(x)=\frac{1}{\varepsilon}I(x>a-\varepsilon)$, we may apply (\ref{eq:gammastein2}) with the choice $h=h_{a,\varepsilon}$ in conjunction with (\ref{eq:steinfactor}) to get
\[
\sup_{a\in\mathbb{R}^+}|\mathbb{E}h_{a,\varepsilon}(Y)-\mathbb{E}h_{a,\varepsilon}(Z)|\leq\frac{rc_r}{\varepsilon s}\mathbb{E}|Y^s-Y^{nz}|\,.
\]
Applying Lemma \ref{lem:smooth} and choosing 
\[
\varepsilon=\sqrt{\frac{rc_r}{s}\mathbb{E}|Y^s-Y^{nz}|}\,,
\] 
we have 
\[
d_{SL}(Y,Z)\leq2\sqrt{\frac{rc_r}{s}\mathbb{E}|Y^s-Y^{nz}|}\,.
\]
It remains only to bound $\mathbb{E}|Y^s-Y^{nz}|$. Using Lemmas \ref{lem:nz} and \ref{lem:size}, 
\[
\mathbb{E}|Y^s-Y^{nz}|=\mathbb{E}|I_\rho(NX_1)^s+(1-I_\rho)(X_1^s+Y^\prime)-I_{\sigma}(NX_1)^{nz}-(1-I_\sigma)(X_1^{gz}+Y^\prime)|\,.
\]
We can easily check that $\sigma\geq\rho$, so we may couple $I_\sigma$ and $I_\rho$ such that $I_\sigma=I_\rho=1$ with probability $\rho$, $I_\sigma=I_\rho=0$ with probability $1-\sigma$, and $I_\sigma=1$, $I_\rho=0$ with probability $\sigma-\rho$. We thus have
\begin{equation}\label{eq:gammabd}
\mathbb{E}|Y^s-Y^{nz}|\leq\rho\mathbb{E}|(NX_1)^s-(NX_1)^{nz}|+(\sigma-\rho)\mathbb{E}|X_1^s+Y^\prime-(NX_1)^{nz}|+(1-\sigma)\mathbb{E}|X_1^s-X_1^{gz}|\,.
\end{equation}
We can use the definition (\ref{eq:def_size}) to obtain
\[
\mathbb{E}|(NX_1)^s|=\frac{(\lambda+1)\mathbb{E}[X_1^2]}{\mathbb{E}[X_1]}\,,
\]
and similarly we may use the definition (\ref{eq:def_nz}) together with $\mathbb{E}[N^3]=\lambda(\lambda^2+3\lambda+1)$ to calculate that $\mathbb{E}|(NX_1)^{nz}|=\beta$, where
\begin{equation}\label{eq:beta}
\beta=\frac{(\lambda^2+3\lambda+1)\mathbb{E}[X_1^3]-\lambda(\lambda+1)\mathbb{E}[X_1]\mathbb{E}[X_1^2]}{2\left(\mathbb{E}[X_1^2]+\lambda\mbox{Var}(X_1)\right)}\,.
\end{equation}
Using the triangle inequality to bound the first two expectations on the right-hand side of (\ref{eq:gammabd}), we thus have the following.
\begin{theorem}\label{thm:gamma}
Let $N\sim\mbox{Po}(\lambda)$ for $\lambda>0$, and $X_1,X_2,\ldots$ be non-negative random variables which satisfy (\ref{eq:model}) and are independent of $N$. Let $Y=X_1+\cdots+X_N$. Then
\begin{multline*}
d_{SL}(Y,Z)\\
\leq2\sqrt{\lambda c_r\mathbb{E}[X_1]\left\{\frac{\lambda\rho\mathbb{E}[X_1^2]}{\mathbb{E}[X_1]}+\sigma\left(\frac{\mathbb{E}[X_1^2]}{\mathbb{E}[X_1]}+\beta\right)+(\sigma-\rho)\lambda\mathbb{E}[X_1]+(1-\sigma)\mathbb{E}|X_1^s-X_1^{gz}|\right\}}\,,
\end{multline*}
for any coupling of $X_1^s$ and $X_1^{gz}$, where $Z\sim\Gamma(r,s)$, the parameters $r>0$ and $s>0$ are given by (\ref{eq:params}), $\sigma$ is given by (\ref{eq:sigma}), $c_r$ is given by (\ref{eq:steinfactor}), and $\beta$ is given by (\ref{eq:beta}).  
\end{theorem}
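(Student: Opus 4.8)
The plan is to assemble the ingredients already established above, so most of the real work is done and what remains is bookkeeping. The argument has reduced the stop-loss distance to the master bound
\[
d_{SL}(Y,Z)\leq2\sqrt{\frac{rc_r}{s}\,\mathbb{E}|Y^s-Y^{nz}|}\,,
\]
and so it remains only to bound $\mathbb{E}|Y^s-Y^{nz}|$ and to rewrite the prefactor. For the prefactor I would use the moment-matching choice of $(r,s)$ in (\ref{eq:params}), which gives $r/s=\mathbb{E}[Z]=\mathbb{E}[Y]=\lambda\mathbb{E}[X_1]$; hence $rc_r/s=\lambda c_r\mathbb{E}[X_1]$, producing the factor $2\sqrt{\lambda c_r\mathbb{E}[X_1]\{\cdots\}}$ in the statement, with $\{\cdots\}$ the bound on $\mathbb{E}|Y^s-Y^{nz}|$ developed below.

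For $\mathbb{E}|Y^s-Y^{nz}|$ I would start from the decomposition (\ref{eq:gammabd}), whose derivation rests on the couplings of $Y^s$ and $Y^{nz}$ from Lemmas \ref{lem:size} and \ref{lem:nz} and on the coupling of $I_\sigma,I_\rho$, which is legitimate because $\sigma\geq\rho$ (subtracting $\rho$ in the definition (\ref{eq:sigma}) leaves a factor proportional to $\lambda\mbox{Var}(X_1)(1-\rho)\geq0$). The three expectations on the right of (\ref{eq:gammabd}) are then handled as follows. To the first two I would apply the triangle inequality, splitting off $(NX_1)^s$, $(NX_1)^{nz}$, $X_1^s$ and $Y'$. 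Crucially, since the $X_j$ are non-negative, so are $X_1^s$ and $Y'=X_1'+\cdots+X_N'$, whence $\mathbb{E}|X_1^s|=\mathbb{E}[X_1^2]/\mathbb{E}[X_1]$ and $\mathbb{E}|Y'|=\lambda\mathbb{E}[X_1]$; the absolute value cannot be dropped from $(NX_1)^{nz}$, however, which may be negative because non-zero-biasing involves a centring. The two remaining pieces are exactly the quantities computed just above the statement, $\mathbb{E}|(NX_1)^s|=(\lambda+1)\mathbb{E}[X_1^2]/\mathbb{E}[X_1]$ and $\mathbb{E}|(NX_1)^{nz}|=\beta$ from (\ref{eq:beta}), while the third expectation $(1-\sigma)\mathbb{E}|X_1^s-X_1^{gz}|$ is retained to be optimised later over couplings of $X_1^s$ and $X_1^{gz}$.

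Substituting these values turns (\ref{eq:gammabd}) into a sum of explicit terms weighted by $\rho$, $\sigma-\rho$ and $1-\sigma$, and the one step requiring genuine care is the collection of like terms. The two contributions proportional to $\beta$ combine as $\rho\beta+(\sigma-\rho)\beta=\sigma\beta$, the coefficient of $\mathbb{E}[X_1^2]/\mathbb{E}[X_1]$ collects to $\rho(\lambda+1)+(\sigma-\rho)=\lambda\rho+\sigma$, yielding the two terms $\lambda\rho\mathbb{E}[X_1^2]/\mathbb{E}[X_1]$ and $\sigma\mathbb{E}[X_1^2]/\mathbb{E}[X_1]$, and the surviving $(\sigma-\rho)\lambda\mathbb{E}[X_1]$ comes from $\mathbb{E}[Y']$. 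This reproduces exactly the expression inside the braces. Combining this bound on $\mathbb{E}|Y^s-Y^{nz}|$ with the master bound and the identity $rc_r/s=\lambda c_r\mathbb{E}[X_1]$ completes the proof.

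The main obstacle here is arithmetic rather than conceptual: one must track the three indicator-weighted groups consistently through the triangle inequality and verify that the coefficients recombine into precisely the four-term expression claimed. The only point of real subtlety is remembering to use the non-negativity of the $X_j$ to pass from $\mathbb{E}|X_1^s|,\mathbb{E}|Y'|$ to the corresponding means, while keeping the absolute value on $(NX_1)^{nz}$.
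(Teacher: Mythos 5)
Your proposal is correct and follows essentially the same route as the paper: the master bound $d_{SL}(Y,Z)\leq2\sqrt{(rc_r/s)\,\mathbb{E}|Y^s-Y^{nz}|}$, the coupled-indicator decomposition (\ref{eq:gammabd}) justified by $\sigma\geq\rho$, the triangle inequality on the first two terms with $\mathbb{E}|(NX_1)^s|=(\lambda+1)\mathbb{E}[X_1^2]/\mathbb{E}[X_1]$ and $\mathbb{E}|(NX_1)^{nz}|=\beta$, and the identity $r/s=\lambda\mathbb{E}[X_1]$. The recombination of coefficients ($\rho(\lambda+1)+(\sigma-\rho)=\lambda\rho+\sigma$, etc.) is exactly as in the paper, so nothing further is needed.
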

As in the Gaussian case above, our upper bound simplifies considerably in the case where $\rho=0$. In that setting we obtain the following upper bound.
\begin{corollary}\label{cor:gamma}
Let $N\sim\mbox{Po}(\lambda)$ for $\lambda>0$, and $X,X_1,X_2,\ldots$ be IID non-negative random variables independent of $N$. Let $Y=X_1+\cdots+X_N$. Then
\[
d_{SL}(Y,Z)\leq2\sqrt{\lambda\mathbb{E}[X]\left(\frac{\sqrt{2\pi}+e^{-1}}{\sqrt{r+2}}+\frac{2}{r+2}\right)\mathbb{E}|X^s-X^{gz}|}\,,
\] 
for any coupling of $X^s$ and $X^{gz}$, where $Z\sim\Gamma(r,s)$ and the parameters $r>0$ and $s>0$ are given by (\ref{eq:params}) with $\rho=0$.  
\end{corollary}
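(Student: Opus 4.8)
The plan is to obtain Corollary~\ref{cor:gamma} as a direct specialisation of Theorem~\ref{thm:gamma} to the independent case $\rho=0$, so no new machinery is required. First I would record that setting $\rho=0$ in the model (\ref{eq:model}) collapses the joint characteristic function to the product $\prod_{j}\mathbb{E}[e^{it_jX_j}]$, so the summands are independent; combined with exchangeability this matches the IID hypothesis of the corollary, and I may write $X$ for a generic summand, with $\mathbb{E}[X]=\mathbb{E}[X_1]$.

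Next I would evaluate the mixing parameter $\sigma$ from its definition (\ref{eq:sigma}) at $\rho=0$. Since $\rho$ appears as a factor in the numerator of $\sigma$, we obtain $\sigma=0$ immediately. The remainder of the argument is then a substitution into the bound of Theorem~\ref{thm:gamma}: with $\rho=\sigma=0$, the term $\lambda\rho\mathbb{E}[X_1^2]/\mathbb{E}[X_1]$ vanishes because it carries an explicit factor $\rho$, the term $\sigma\left(\mathbb{E}[X_1^2]/\mathbb{E}[X_1]+\beta\right)$ vanishes because of its factor $\sigma$, and the term $(\sigma-\rho)\lambda\mathbb{E}[X_1]$ vanishes since both $\sigma$ and $\rho$ are zero. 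The single surviving contribution inside the braces is $(1-\sigma)\mathbb{E}|X_1^s-X_1^{gz}|=\mathbb{E}|X^s-X^{gz}|$.

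Finally I would insert the explicit formula (\ref{eq:steinfactor}) for the Stein factor $c_r$, yielding
\[
d_{SL}(Y,Z)\leq 2\sqrt{\lambda\,\mathbb{E}[X]\left(\frac{\sqrt{2\pi}+e^{-1}}{\sqrt{r+2}}+\frac{2}{r+2}\right)\mathbb{E}|X^s-X^{gz}|}\,,
\]
with $r$ and $s$ given by (\ref{eq:params}) evaluated at $\rho=0$, which is exactly the claimed bound. There is no substantive obstacle in this argument, as the corollary is a pure specialisation of the theorem; the only point requiring any care is verifying that all three $\rho$- and $\sigma$-dependent terms inside the braces vanish simultaneously, and this rests entirely on the observation that $\sigma=0$ when $\rho=0$.
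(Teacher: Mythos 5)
Your proposal is correct and follows exactly the paper's own route: the corollary is obtained by setting $\rho=0$ in Theorem \ref{thm:gamma}, observing from (\ref{eq:sigma}) that $\sigma=0$, so that only the term $(1-\sigma)\mathbb{E}|X_1^s-X_1^{gz}|$ survives inside the braces, and then substituting the explicit value of $c_r$ from (\ref{eq:steinfactor}). Nothing is missing.
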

We consider a simple example to illustrate the upper bound of Corollary \ref{cor:gamma}. Following, for example, Kolev and Paiva \cite{kolev08}, we let $X\sim\mbox{Be}(p)$ have a Bernoulli distribution with $\mathbb{P}(X=1)=p>0$. This application arises in a stop-loss reinsurance contract with retention $t>0$, where $X_i$ has the form $I(\xi_i>t)$, an indicator that the reinsurer has to pay for a particular claim, and $p=\mathbb{P}(\xi_i>t)$. The random sum $Y$ then counts the total number of claims to be paid by the reinsurer. In this setting, the definitions (\ref{eq:def_size}) and (\ref{eq:def_gen}), respectively, can be easily used to check that $X^s=1$ almost surely and $X^{gz}\sim\mbox{U}(0,1)$, so that $\mathbb{E}|X^s-X^{gz}|=1/2$ and Corollary \ref{cor:gamma} gives 
\begin{equation}\label{eq:gammaeg}
d_{SL}(Y,Z)\leq\sqrt{{2\lambda p}\left(\frac{\sqrt{2\pi}+e^{-1}}{\sqrt{\lambda p+2}}+\frac{2}{\lambda p+2}\right)}\,,
\end{equation}
where $Z\sim\Gamma(\lambda p,1)$.

In line with our results in the Gaussian case, we may have hoped to obtain an upper bound of order $O(1)$, rather than the order $O((\lambda p)^{1/4})$, in (\ref{eq:gammaeg}), as $\lambda p\to\infty$. To obtain this better order, we would have needed a constant $c_r$ of order $O(r^{-1})$, rather than $O(r^{-1/2})$, in (\ref{eq:steinfactor}). No such upper bound is currently available in the literature. The interested reader is referred to Remark 2.20 of \cite{gaunt13} for a more detailed discussion of the possibility of a constant of order $O(r^{-1})$ here. We note, however, that the bound (\ref{eq:gammaeg}) is still useful in spite of this limitation. Letting $\widetilde{Y}$ and $\widetilde{Z}$ denote the standardised versions of $Y$ and $Z$ respectively, as in Section \ref{sec:normal}, (\ref{eq:gammaeg}) gives an upper bound of order $O((\lambda p)^{-1/4})$ on $d_{SL}(\widetilde{Y},\widetilde{Z})$.

\section{Poisson approximation}\label{sec:poisson} 

We conclude by using Lemma \ref{lem:size} to give Poisson approximation results for the random sum $Y=X_1+\cdots+X_N$. Throughout this section we let $N$ be a non-negative, integer-valued random variable with positive mean, and we let $X_1,X_2,\ldots$ be non-negative, integer-valued random variables satisfying (\ref{eq:model}), independent of $N$, and with $\mathbb{E}[X_1]>0$. We let $Z\sim\mbox{Po}(\mathbb{E}[Y])$.

Following well-established techniques for Poisson approximation using Stein's method (see \cite{barbour92} for a detailed account), we may write
\[
d_W(Y,Z)\leq\mathbb{E}[Y]\mathbb{E}|Y+1-Y^s|\sup_{h\in\mathcal{H}_W}\|\Delta f\|\,,
\]
where $\Delta f(x)=f(x+1)-f(x)$, and $f$ satisfies $f(0)=0$ and  
\[
h(x)-\mathbb{E}h(Z)=\mathbb{E}[Y]f(x+1)-xf(x)
\]
for $x=1,2,\ldots$, for a given function $h$. Lemma 1.1.5 of \cite{barbour92} gives us that $\sup_{h\in\mathcal{H}_W}\|\Delta f\|\leq3\mathbb{E}[Y]^{-1/2}$, and our Lemma \ref{lem:size} yields
\begin{align}
\nonumber\mathbb{E}|Y+1-Y^s|&=\mathbb{E}\left|\sum_{k=1}^NX_k+1-I_\rho(NX_1)^s-(1-I_\rho)\left(X_1^s+\sum_{j=1}^{N^s-1}X_j^\prime\right)\right|\\
\label{eq:poub}&=\rho\mathbb{E}\left|\sum_{k=1}^NX_k+1-(NX_1)^s\right|+(1-\rho)\mathbb{E}\left|\sum_{k=1}^NX_k+1-X_1^s-\sum_{j=1}^{N^s-1}X_j^\prime\right|\,.
%&\leq\rho(\mathbb{E}[Y]+1+\mathbb{E}[(NX_1)^s])+(1-\rho)\left(\mathbb{E}|X_1^s-1|+\mathbb{E}\left|\sum_{k=1}^NX_k-\sum_{j=1}^{N^s-1}X_j^\prime\right|\right)\,.
\end{align}
For the first term on the right-hand side of (\ref{eq:poub}), we write
\begin{align*}
\mathbb{E}\left|\sum_{k=1}^NX_k+1-(NX_1)^s\right|&\leq\mathbb{E}|N+1-N^s|+\mathbb{E}\left|\sum_{j=1}^N(X_j-1)\right|+\mathbb{E}\left|N^s(1-X_1^s)\right|\\
&\leq\mathbb{E}|N+1-N^s|+\mathbb{E}[N]\mathbb{E}|X_1-1|+\frac{\mathbb{E}[N^2]}{\mathbb{E}[N]}\left(\frac{\mathbb{E}[X_1^2]}{\mathbb{E}[X_1]}-1\right)\,,
\end{align*}
where we use Equation (28) of \cite{arratia19} to write $(NX_1)^s\stackrel{d}{=}N^sX_1^s$, note that $X_1^s\geq1$ almost surely, and use (\ref{eq:def_size}) to obtain $\mathbb{E}[X^s]=\mathbb{E}[X^2]/\mathbb{E}[X]$.

For the second term on the right-hand side of (\ref{eq:poub}), we have
\begin{align*}
\mathbb{E}\left|\sum_{k=1}^NX_k+1-X_1^s-\sum_{j=1}^{N^s-1}X_j^\prime\right|
&\leq\mathbb{E}|X_1^s-1|+\mathbb{E}\left|\sum_{k=1}^NX_k-\sum_{j=1}^{N^s-1}X_j^\prime\right|\\
&=\frac{\mathbb{E}[X_1^2]}{\mathbb{E}[X_1]}-1+\mathbb{E}\left|\sum_{k=1}^NX_k-\sum_{j=1}^{N^s-1}X_j^\prime\right|\,.
\end{align*}
Combining the above ingredients establishes the following.
\begin{theorem}\label{thm:poisson}
Let $N$ be a non-negative, integer-valued random variable, and let $X_1,X_2,\ldots$ be non-negative, integer-valued random variables which satisfy (\ref{eq:model}) and are independent of $N$. Let $Y=X_1+\cdots+X_N$. Then
\begin{multline*}
d_W(Y,Z)\leq3\rho\sqrt{\mathbb{E}[N]\mathbb{E}[X_1]}\left(\mathbb{E}|N+1-N^s|+\mathbb{E}[N]\mathbb{E}|X_1-1|+\frac{\mathbb{E}[N^2]}{\mathbb{E}[N]}\left[\frac{\mathbb{E}[X_1^2]}{\mathbb{E}[X_1]}-1\right]\right)\\
+3(1-\rho)\sqrt{\mathbb{E}[N]\mathbb{E}[X_1]}\left(\frac{\mathbb{E}[X_1^2]}{\mathbb{E}[X_1]}-1+\mathbb{E}\left|\sum_{k=1}^NX_k-\sum_{j=1}^{N^s-1}X_j^\prime\right|\right)\,,
\end{multline*}
for any coupling of $N^s$ and $N$, where $Z\sim\mbox{Po}(\mathbb{E}[N]\mathbb{E}[X_1])$.
\end{theorem}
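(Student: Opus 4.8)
The plan is to combine Stein's method for Poisson approximation with the size-biased representation of Lemma \ref{lem:size}. The natural starting point is the standard Stein bound
\[
d_W(Y,Z)\leq\mathbb{E}[Y]\,\mathbb{E}|Y+1-Y^s|\,\sup_{h\in\mathcal{H}_W}\|\Delta f\|\,,
\]
where $f$ solves the Poisson Stein equation and $\Delta f(x)=f(x+1)-f(x)$. I would then invoke the Stein factor bound $\sup_{h\in\mathcal{H}_W}\|\Delta f\|\leq3\mathbb{E}[Y]^{-1/2}$ from Lemma 1.1.5 of \cite{barbour92}, which reduces the whole problem to estimating $\mathbb{E}|Y+1-Y^s|$ and multiplying by $3\sqrt{\mathbb{E}[Y]}=3\sqrt{\mathbb{E}[N]\mathbb{E}[X_1]}$, since $\mathbb{E}[Y]=\mathbb{E}[N]\mathbb{E}[X_1]$.

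Next I would substitute the representation (\ref{eq:sblem1}) of $Y^s$ into $\mathbb{E}|Y+1-Y^s|$, retaining the original summands $\sum_{k=1}^N X_k$ on the unbiased side so that $Y$ and $Y^s$ are coupled through the shared randomness. Conditioning on the independent indicator $I_\rho$ splits the expectation into a $\rho$-weighted term built from $(NX_1)^s$ and a $(1-\rho)$-weighted term built from $X_1^s+\sum_{j=1}^{N^s-1}X_j'$. The $(1-\rho)$ term is the easy half: a single application of the triangle inequality peels off $\mathbb{E}|X_1^s-1|$ and leaves the quantity $\mathbb{E}|\sum_{k=1}^N X_k-\sum_{j=1}^{N^s-1}X_j'|$ unevaluated, which is precisely the piece best left for the user to optimise over a coupling of $N^s$ and $N$.

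The \emph{main obstacle} is the $\rho$-weighted term $\mathbb{E}|\sum_{k=1}^N X_k+1-(NX_1)^s|$, which needs a more delicate decomposition. Here the key is to use the product representation $(NX_1)^s\stackrel{d}{=}N^sX_1^s$ from Equation (28) of \cite{arratia19} and then telescope:
\[
\sum_{k=1}^N X_k+1-N^sX_1^s=\sum_{j=1}^N(X_j-1)+(N+1-N^s)+N^s(1-X_1^s)\,.
\]
The triangle inequality then produces three controllable pieces. The first two contribute $\mathbb{E}[N]\mathbb{E}|X_1-1|$ (by a Wald-type identity together with exchangeability) and $\mathbb{E}|N+1-N^s|$. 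For the third I would exploit the fact that, since the $X_j$ are non-negative and integer-valued, the size-biased variable satisfies $X_1^s\geq1$ almost surely; this removes the absolute value, and using the independence of $N^s$ and $X_1^s$ with $\mathbb{E}[N^s]=\mathbb{E}[N^2]/\mathbb{E}[N]$ and $\mathbb{E}[X_1^s]=\mathbb{E}[X_1^2]/\mathbb{E}[X_1]$ yields $\frac{\mathbb{E}[N^2]}{\mathbb{E}[N]}\big(\frac{\mathbb{E}[X_1^2]}{\mathbb{E}[X_1]}-1\big)$. The same sign observation gives $\mathbb{E}|X_1^s-1|=\frac{\mathbb{E}[X_1^2]}{\mathbb{E}[X_1]}-1$ in the $(1-\rho)$ term. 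Collecting all the pieces and restoring the factor $3\sqrt{\mathbb{E}[N]\mathbb{E}[X_1]}$ then gives the stated bound.
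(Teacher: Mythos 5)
Your proposal is correct and follows essentially the same route as the paper: the same Stein bound with the factor $3\,\mathbb{E}[Y]^{-1/2}$ from Lemma 1.1.5 of \cite{barbour92}, the same conditioning on $I_\rho$ in the representation of Lemma \ref{lem:size}, and the same decomposition of the $\rho$-term via $(NX_1)^s\stackrel{d}{=}N^sX_1^s$ into the three pieces $\sum_{j=1}^N(X_j-1)$, $N+1-N^s$ and $N^s(1-X_1^s)$, with $X_1^s\geq1$ almost surely used to evaluate the latter. No gaps.
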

Note that the upper bound of Theorem \ref{thm:poisson} is zero if $N$ is Poisson and $X_1$ is 1 almost surely, regardless of the value of $\rho$, as we would expect.
As in the previous cases we have looked at above, the upper bound simplifies considerably in the case $\rho=0$, where we may take $X_j^\prime=X_j$ for all $j$.
\begin{corollary}\label{cor:poisson}
Let $N$ be a non-negative, integer-valued random variable, and let $X,X_1,X_2,\ldots$ be IID non-negative, integer-valued random variables independent of $N$. Let $Y=X_1+\cdots+X_N$. Then
\[
d_W(Y,Z)\leq3\sqrt{\mathbb{E}[N]\mathbb{E}[X_1]}
\left(\frac{\mathbb{E}[X^2]}{\mathbb{E}[X]}-1+\mathbb{E}|N+1-N^s|\mathbb{E}[X]\right)\,,
\]
for any coupling of $N^s$ and $N$, where $Z\sim\mbox{Po}(\mathbb{E}[N]\mathbb{E}[X])$.
\end{corollary}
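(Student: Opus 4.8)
The plan is to specialise Theorem~\ref{thm:poisson} to the case $\rho = 0$ and simplify the resulting expression. First I would set $\rho = 0$ in the bound of Theorem~\ref{thm:poisson}. The entire first group of terms, being multiplied by the prefactor $\rho$, vanishes immediately. This leaves only the contribution
\[
3\sqrt{\mathbb{E}[N]\mathbb{E}[X_1]}\left(\frac{\mathbb{E}[X_1^2]}{\mathbb{E}[X_1]} - 1 + \mathbb{E}\left|\sum_{k=1}^N X_k - \sum_{j=1}^{N^s - 1}X_j^\prime\right|\right)\,,
\]
and it remains only to evaluate the final expectation in the independent setting.

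Since the $X_j$ are now IID and independent of $N$, I would exploit the freedom in Lemma~\ref{lem:size} to take $X_j^\prime = X_j$ for every $j$, so that the two sums share the same summands. The difference $\sum_{k=1}^N X_k - \sum_{j=1}^{N^s-1}X_j$ then reduces to a sum over exactly those indices in which the two ranges disagree: if $N^s - 1 \geq N$ the difference equals $-\sum_{j=N+1}^{N^s-1}X_j$, while if $N^s - 1 < N$ it equals $\sum_{k=N^s}^{N}X_k$. In either case the number of summands is precisely $|N+1-N^s|$, and because each $X_j \geq 0$ the absolute value of the difference is simply the sum of these $|N+1-N^s|$ non-negative terms.

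The key step is then a conditioning argument. Conditioning on the pair $(N, N^s)$ fixes the number of summands at the deterministic value $|N+1-N^s|$; since the $X_j$ are IID, independent of $(N,N^s)$, and non-negative, the conditional expectation of this sum is $|N+1-N^s|\,\mathbb{E}[X]$. Taking expectations over $(N,N^s)$, for whichever coupling has been chosen, gives
\[
\mathbb{E}\left|\sum_{k=1}^N X_k - \sum_{j=1}^{N^s-1}X_j\right| = \mathbb{E}|N+1-N^s|\,\mathbb{E}[X]\,.
\]
Substituting this into the simplified bound, and using $\mathbb{E}[X_1] = \mathbb{E}[X]$ together with $\mathbb{E}[X_1^2]/\mathbb{E}[X_1] = \mathbb{E}[X^2]/\mathbb{E}[X]$, yields the stated inequality. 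The only point requiring care is the index bookkeeping in the two cases $N^s - 1 \geq N$ and $N^s - 1 < N$, so as to identify the common count $|N+1-N^s|$; once the non-negativity of the $X_j$ is used to drop the absolute value, the remainder is an elementary application of the tower property, so I do not expect any serious obstacle here.
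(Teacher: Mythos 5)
Your proposal is correct and follows exactly the paper's route: specialise Theorem~\ref{thm:poisson} to $\rho=0$, take $X_j'=X_j$, and evaluate $\mathbb{E}\bigl|\sum_{k=1}^N X_k-\sum_{j=1}^{N^s-1}X_j\bigr|$ as $\mathbb{E}|N+1-N^s|\,\mathbb{E}[X]$ via non-negativity and conditioning on $(N,N^s)$. You simply make explicit the index bookkeeping that the paper leaves implicit.
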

To illustrate the upper bound of Corollary \ref{cor:poisson}, we consider the following two examples:
\begin{enumerate}
\item Let $N\sim\mbox{Bin}(n,p)$ have a binomial distribution. Combining Corollary \ref{cor:poisson} with (\ref{eq:bineg}) gives
\[
d_W(Y,Z)\leq3\sqrt{np\mathbb{E}[X]}\left(\frac{\mathbb{E}[X^2]}{\mathbb{E}[X]}-1+p\mathbb{E}[X]\right)\,.
\]
\item As a second and final example, suppose that $N\sim\mbox{Po}(\Lambda)$ has a mixed Poisson distribution, for some positive random variable $\Lambda$. Then using (\ref{eq:mpeg}) gives
\[
d_W(Y,Z)\leq3\sqrt{\mathbb{E}[\Lambda]\mathbb{E}[X]}\left(\frac{\mathbb{E}[X^2]}{\mathbb{E}[X]}-1+\frac{\mbox{Var}(\Lambda)}{\mathbb{E}[\Lambda]}\mathbb{E}[X]\right)\,.
\]
\end{enumerate}

We conclude this section by noting that the same techniques can be used to bound the total variation distance between $Y$ and $Z$, defined by
\[
d_{TV}(Y,Z)=\sup_{A\subseteq\{0,1,2,\ldots\}}|\mathbb{P}(Y\in A)-\mathbb{P}(Z\in A)|\,.
\] 
We can again follow the same well-established techniques for Poisson approximation using Stein's method, the only change needed is a different set of test functions $h$ in place of $\mathcal{H}_W$. We instead use the set of indicator functions of subsets of non-negative integers, and can replace the bound $\sup_h\|\Delta f\|\leq3(\mathbb{E}[Y])^{-1/2}$ used above with the bound $\sup_h\|\Delta f\|\leq(\mathbb{E}[Y])^{-1}$; see Lemma 1.1.1 of \cite{barbour92}. All other parts of the proof remain unchanged. We then obtain, for example, the following analogue of Corollary \ref{cor:poisson} for the total variation distance; an analogue of Theorem \ref{thm:poisson} may also be easily written down.
\begin{corollary}
Let $N$ be a non-negative, integer-valued random variable, and let $X,X_1,X_2,\ldots$ be IID non-negative, integer-valued random variables independent of $N$. Let $Y=X_1+\cdots+X_N$. Then
\[
d_{TV}(Y,Z)\leq\frac{\mathbb{E}[X^2]}{\mathbb{E}[X]}-1+\mathbb{E}|N+1-N^s|\mathbb{E}[X]\,,
\]
for any coupling of $N^s$ and $N$, where $Z\sim\mbox{Po}(\mathbb{E}[N]\mathbb{E}[X])$.
\end{corollary}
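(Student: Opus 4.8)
The plan is to follow exactly the same route as for Corollary \ref{cor:poisson}, only replacing the Stein factor for the Wasserstein distance with the corresponding one for total variation. First I would set up the standard Stein equation for Poisson approximation: for a test function $h$ taken from the class of indicators of subsets of $\{0,1,2,\ldots\}$, let $f$ solve $h(x)-\mathbb{E}h(Z)=\mathbb{E}[Y]f(x+1)-xf(x)$ with $f(0)=0$. As in the Wasserstein case, the size-bias characterisation of the Poisson distribution together with Lemma \ref{lem:size} yields the bound $d_{TV}(Y,Z)\leq \mathbb{E}[Y]\,\mathbb{E}|Y+1-Y^s|\,\sup_h\|\Delta f\|$, where $\Delta f(x)=f(x+1)-f(x)$.

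The only substantive difference from before is the Stein factor: Lemma 1.1.1 of \cite{barbour92} gives $\sup_h\|\Delta f\|\leq(\mathbb{E}[Y])^{-1}$ over the indicator test functions, in place of the bound $3(\mathbb{E}[Y])^{-1/2}$ used for Wasserstein distance. Substituting this immediately collapses the prefactor and gives the clean bound $d_{TV}(Y,Z)\leq\mathbb{E}|Y+1-Y^s|$, so it remains only to bound $\mathbb{E}|Y+1-Y^s|$ in the independent case.

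For that final step I would specialise Lemma \ref{lem:size} to $\rho=0$, where $I_\rho=0$ almost surely, and take $X_j'=X_j$, so that $Y^s\stackrel{d}{=}X_1^s+\sum_{j=1}^{N^s-1}X_j$. Writing $Y+1-Y^s=\sum_{k=1}^N X_k+1-X_1^s-\sum_{j=1}^{N^s-1}X_j$ and applying the triangle inequality splits the bound into $\mathbb{E}|X_1^s-1|$ and $\mathbb{E}|\sum_{k=1}^N X_k-\sum_{j=1}^{N^s-1}X_j|$. The first equals $\mathbb{E}[X^2]/\mathbb{E}[X]-1$ by (\ref{eq:def_size}) together with the almost-sure bound $X^s\geq1$; the second, by conditioning on $N$ and $N^s$ and using that the difference of the two partial sums is a sum of exactly $|N+1-N^s|$ independent copies of $X$, equals $\mathbb{E}|N+1-N^s|\mathbb{E}[X]$. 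Adding these gives the stated bound.

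I expect no genuine obstacle here, since the argument is a direct transcription of the proof of Corollary \ref{cor:poisson} with the total-variation Stein factor. The only point requiring a little care is the bookkeeping in the random-sum difference term: one must check that whether $N^s-1\leq N$ or $N^s-1>N$, the absolute difference is in each case a sum of $|N+1-N^s|$ copies of $X$, so that independence of the $X_j$ from $(N,N^s)$ yields the factorisation $\mathbb{E}|N+1-N^s|\mathbb{E}[X]$ after conditioning.
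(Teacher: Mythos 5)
Your proposal is correct and follows exactly the paper's route: the same Stein equation over indicator test functions, the same Stein factor $\sup_h\|\Delta f\|\leq(\mathbb{E}[Y])^{-1}$ from Lemma 1.1.1 of \cite{barbour92} collapsing the bound to $\mathbb{E}|Y+1-Y^s|$, and the same specialisation of Lemma \ref{lem:size} at $\rho=0$ with the split into $\mathbb{E}|X^s-1|$ and the random-sum difference term. No gaps.
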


\end{document}